\numberwithin{equation}{section}
\numberwithin{table}{section}
\numberwithin{figure}{section}
\let\c@table\c@figure
\newtheorem{theorem}{Theorem}[section]
\newtheorem{lemma}[theorem]{Lemma}
\newtheorem{remark}{Remark}[section]
\theoremstyle{definition}
\newcommand{\wt}[1]{\widetilde{#1}}
\newcommand{\MTh}{\mathcal{T}_h}
\newcommand{\lj}{[ \hspace{-2pt} [}
\newcommand{\rj}{] \hspace{-2pt} ]}
\def\na{\nabla}
\def\Lam{\Lambda}
\def\Om{\Omega}
\newcommand{\mb}[1]{\mathbb{#1}}
\newcommand{\mc}[1]{\mathcal{#1}}
\newcommand{\abs}[1]{\left\lvert#1\right\rvert}
\newcommand{\nm}[2]{\|\,#1\,\|_{#2}}
\newcommand{\snm}[2]{\abs{\,#1\,}_{#2}}
\newcommand{\Lr}[1]{\left(#1\right)}
\newcommand{\jump}[1]{[\![#1]\!]}
\newcommand{\aver}[1]{\left\{\,#1\,\right\}}
\newcommand{\set}[2]{\{\,#1\,\mid\,#2\}}
\newcommand{\enernm}[1]{\|\!|\,#1\,\|\!|}
\newcommand{\bm}{\boldsymbol}
\newcommand{\un}{\boldsymbol{\mathrm{n}}}
\def\dx{\mathrm{d}x}
\def\ds{\mathrm{d}s}
\definecolor{lightgray}{gray}{0.9}
\begin{document}
\title[Eigenvalue Problem]{Solving Eigenvalue Problems in a
  Discontinuous Approximation Space by Patch Reconstruction}

\author[R. Li]{Ruo Li}
\address{CAPT, LMAM and School of Mathematical Sciences, Peking
University, Beijing 100871, P. R. China} \email{rli@math.pku.edu.cn}

\author[Z.-Y. Sun]{Zhiyuan Sun}
\address{Institute of Applied Physics and Computational Mathematics,
Beijing 100094, P. R. China} \email{zysun.math@gmail.com}

\author[F.-Y.Yang]{Fanyi Yang}
\address{School of Mathematical Sciences, Peking University, Beijing
  100871, P. R. China} \email{yangfanyi@pku.edu.cn}

\begin{abstract}
   We  adapt a symmetric interior penalty discontinuous
   Galerkin method using a patch reconstructed approximation space to
   solve elliptic eigenvalue problems, including both second and
   fourth order problems in 2D and 3D. It is a direct extension of the
   method recently proposed to solve corresponding boundary value
   problems, and the optimal error estimates of the approximation to
   eigenfunctions and eigenvalues are instant consequences from
   existing results. The method enjoys the advantage that it uses only
   one degree of freedom on each element to achieve very high order
   accuracy, which is highly preferred for eigenvalue problems as
   implied by Zhang's recent study [J. Sci. Comput. 65(2), 2015]. By
   numerical results, we illustrate that higher order methods can
   provide much more reliable eigenvalues. To justify that our method
   is the right one for eigenvalue problems, we show that the patch
   reconstructed approximation space attains the same accuracy with
   fewer degrees of freedom than classical discontinuous Galerkin
   methods.  With the increasing of the polynomial order, our method
   can even achieve a better performance than conforming finite
   element methods, such methods are traditionally the methods of
   choice to solve problems with high regularities.

  \noindent\textbf{keyword:} elliptic eigenvalue problem,
  discontinuous Galerkin method, patch reconstruction
  
  \noindent\textbf{MSC2010:} 49N45; 65N21
  
\end{abstract}

\maketitle
% vim:spell:tw=70:fo+=Mn:cc=70
\section{Introduction}\label{sec:intro}
In this paper, we consider the numerical method for
solving eigenvalue problems of $2p$-th order elliptic operator for
$p=1$ and $2$. Those problems arise in many important applications.
The Laplace eigenvalue problem occurs naturally in vibrating elastic
membranes, electromagnetic waveguides and acoustic theory, and the
biharmonic eigenvalue problem appears in mechanics and inverse
scatting theory.

The conforming finite element method (FEM) for eigenvalue problems has
been well investigated. We refer to the review papers of Kuttler and
Sigillito \cite{kuttler1984eigenvalues} and Boffi
\cite{boffi2010finite} for the details. For the biharmonic operator,
we have the commonly used $C^1$ Argyris element \cite{argyris1968tuba}
and the $C^0$ interior penalty Galerkin method ($C^0$ IPG)
\cite{engel2002continuous, brenner2011c, brenner2015C0}. An old but
hot topic for eigenvalue problems is the upper and lower bounds since
\cite{forsythe1954asymptotic}. It is well known that the conforming
FEM can easily achieve the upper bound of the eigenvalues. In
\cite{armentano2003mass} and \cite{hu2004lower}, the lower bound was
achieved by mass lumping, see also other methods in
\cite{liu2013verified, boffi2010finite, armentano2004asymptotic}. Hu
et al. \cite{hu2014lower1, hu2014lower2, hu2016guaranteed} proposed a
systematic method to produce lower bounds by nonconforming
approximation spaces. The discontinuous Galerkin (DG) method, see for
example \cite{cockburn2000development, arnold2002unified,
brenner2008locally}, has been applied to the Laplace eigenvalue
problem \cite{antonietti2006discontinuous} and the Maxwell eigenvalue
problem~\cite{hesthaven2003high, warburton2006role}. As a
nonconforming approximation, the DG method admits the totally
discontinuous polynomial space which leads to a great flexibility
though it is challenged \cite{hughes2000comparison} on its efficiency
in number of degrees of freedom (DOF).

In a recent work \cite{zhang2015how}, Zhang studied an interesting
issue on the number of "trusted" eigenvalues by finite element
approximation for the elliptic eigenvalue problems. It was pointed out
therein that only eigenvalues lower in the spectrum can
achieve optimal convergence rate. Furthermore, the percentage of
reliable eigenvalues will decrease on a finer mesh even if we relax
the convergence rate to linear. Typically, the optimal convergence
rate of the elliptic eigenvalue problem is $h^{2(m+1-p)}$, where $m$
is the polynomial degree. It is implied that high order
methods are more likely to provide a greater number of reliable
eigenvalues, measured relatively to the DOFs used, than a lower order
method. 

Motivated by Zhang's result, in this paper we aim to
apply a symmetric interior penalty discontinuous Galerkin method to
elliptic eigenvalue problems. The method adopts a discontinuous
approximation space proposed in \cite{li2016discontinuous}, where it
was applied to solve elliptic boundary value problems. The core of the
method is to construct an approximation space by the patch
reconstruction technique in a way that one DOF is used in each
element. The reconstructed space is a piecewise polynomial space and
is discontinuous across the element face, thus it is a subspace of the
traditional DG space. The idea has been applied smoothly to the
biharmonic equation \cite{li2017discontinuous} and the Stokes equation
\cite{li2018finite, li2018discontinuous}. For elliptic eigenvalue
problems, it is a direct extension of the method for boundary value
problems. Consequently, the optimal error estimates of the
approximation to eigenfunctions and eigenvalues can be obtained
instantly from existing results for arbitrary order accuracy.

We present all details on the numerical results to verify that higher
order methods can provide much more reliable eigenvalues, which
perfectly agrees with the theoretical prediction in
\cite{zhang2015how}. In comparison to the classical DG method, one may
see that the patch reconstructed approximation space attains the same
accuracy with much less degrees of freedom. In case of using higher
order polynomials, the numerical results show that a better efficiency
in number of DOFs can be achieved by our method even than conforming
finite element methods. We note that for problems with high
regularities, the conforming finite element methods
traditionally outperform the other methods in number of DOFs. The new
observation here in efficiency gives us an enthusiastic encouragement
to apply our method with high order polynomials to elliptic eigenvalue
problems.

The rest of this paper is organized as follows. To be self-contained,
we describe in section \ref{sec:basis} the detailed process to
construct the approximation space and the approximation properties of
the corresponding space. The symmetric interior penalty method for
elliptic operators is presented in section \ref{sec:weakform}, and
the optimal error estimates are then given for the eigenvalues and
eigenfunctions. In section \ref{sec:examples}, we present the
numerical results to illustrate that the proposed method is efficient
for elliptic eigenvalue problems.

%%% Local Variables:
%%% mode: latex
%%% TeX-master: "eigenvalue"
%%% End:

% vim:spell:tw=70:fo+=Mn:cc=70
\section{Approximation Space}\label{sec:basis}
Let us consider a convex polygonal domain $\Om$ in $\mb{R}^D$, $D=2,3$.
$\MTh$ is a polygonal partition of the domain $\Om$. For each polygon
$K$, $h_K$ and $\abs{K}$ denote its diameter and area, respectively.
Besides, let $h{:}=\max_{K\in\MTh}h_K$. For the optimal convergence
analysis, the partition $\MTh$ is assumed to satisfy some shape
regularity conditions. Those regularity conditions are commonly used
in mimetic finite difference
schemes~\cite{Brezzi:2009,DaVeiga2014,Cangiani2011Convergence} and
discontinuous Galerkin method~\cite{Mu:2014}, which are stated as
follows:
\begin{enumerate}
\item[{\bf A1}\;]Any element $K \in \MTh$ admits a sub-decomposition
  $\wt{\MTh}|_K$ that consists of at most $N_s$ triangles, where $N_s$
  is an integer independent of $h$;
\item[{\bf A2}\;]If all the triangles $T\in\wt{\MTh}$ are
  shape-regular in the sense of
  Ciarlet-Raviart~\cite{ciarlet2002finite}: there exists a real
  positive number $\sigma$ independent of $h$ such that
  $h_T/\rho_T\le\sigma$, where $\rho_T$ is the radius of the largest
  ball inscribed in $T$. Then the $\wt{\MTh}$ is a compatible
  sub-decomposition.
\end{enumerate}
The above regularity assumptions lead to some useful estimates, such
as Agmon inequality, approximation property and inverse inequality.
Those inequalities are the foundations to derive the approximation
error estimates for the finite element method. We refer
to~\cite{li2016discontinuous} for the detailed discussion.

The reconstruction operator $\mc{R}$ can be constructed with the given
partition $\MTh$. The degrees of freedom of $\mc{R}$ are located at
one point $x_K\in K$ on each element which are called the sampling
nodes or collocation points. We usually assign the barycenter of $K$
as the sampling node $x_K$. Furthermore, the reconstruction operator
$\mc{R}$ is defined element-wise. An element patch denoted as $S(K)$
is constructed for each element $K$. $S(K)$ is an agglomeration of
elements including $K$ itself and other elements nearby $K$. Let
$\mc{I}_K$ denote the set of sampling nodes belonging to $S(K)$, $\#
S(K)$ and $\# \mc{I}_K$ denote the number of elements belonging to
$S(K)$ and the number of sampling nodes belonging to $\mc{I}_K$,
respectively. Obviously, these two numbers are equal to each other. We
define $d_K{:}=\text{diam}\;S(K)$ and $d{:}=\max_{K\in\MTh}d_K$.

Here we specify the way to construct the element patch while it can be
quite flexible, see~\cite{li2012efficient,li2016discontinuous} for the
alternative approaches. First, a constant number $t$ is assigned to
$\# S(K)$ which is determined by the degree of polynomials. Then we
initialize $S(K)$ as $\{ K \}$, and fill $S(K)$ by adding the nearest
Von Neumann neighbor (adjacent edge-neighboring elements) of the
current geometry $S(K)$. We terminate the recursive process until the
number $\# S(K)$ reaches the number $t$. With such an approach, the
element patches are obtained with a constant number, which is
convenient for the implementation. Meanwhile, the shape regularity of
the geometry of $S(K)$ preserves. All the sampling nodes $x_K$ are
located in element $K$ and all element patches are connected set, that
the stability of reconstruction is fair promising. The reconstruction
process can be conducted element-wise after the sampling nodes
$\mc{I}_K$ and element patch $S(K)$ are specified.

Let $U_h$ be the piecewise constant space associated with $\MTh$,
i.e.,
\[
U_h{:}=\set{v\in L^2(\Om)}{v|_K\in\mb{P}^0(K), \ \forall K \in \MTh}.
\] 
For a piecewise constant function $v\in U_h$ and an element $K$, a
high-order approximation polynomial $\mc{R}_{K} v$ of degree $m$ can
be obtained by solving the following discrete local least-squares:
\begin{equation}\label{eq:leastsquares}
\mc{R}_{K} v=\arg
\min_{p\in\mb{P}^m(S(K))}\sum_{x\in\mc{I}_K}\abs{v(x)-p(x)}^2.
\end{equation}
We assume the problem \eqref{eq:leastsquares} has a unique solution
\cite{li2016discontinuous}. Now, we concentrate on the reconstruction
operator and the corresponding finite element space. Although
$\mc{R}_K v$ gives an approximation polynomial on element patch
$S(K)$, we only use it on element $K$. The global reconstruction
operator $\mc{R}$ is defined as:
\[
(\mc{R} v)|_K:= (\mc{R}_{K} v)|_K, \quad \forall K \in \MTh.
\]
The reconstruction operator $\mc{R}$ actually defines a linear
operator which maps $U_h$ into a piecewise polynomial space,
denoted by
\[
V_h:=\mc{R}(U_h).
\] 
Here, $V_h$ is the reconstructed finite element space which is
spanned by the basis functions $\{\psi_K\}$. Here the basis functions
are defined by the reconstruction operator,
\[
\psi_K:= \mc{R} e_K,
\]
where $e_K\in U_h$ is the characteristic function corresponding to
$K$,
\begin{displaymath}
  e_{K}(x)=\begin{cases} 1,\ x \in K,\\ 0,\ x \notin K.\\
  \end{cases}
\end{displaymath}
Thereafter the reconstruction operator can be explicitly expressed
\[
\mc{R} g =\sum_{K \in \MTh} g(x_K) \psi_K(x) , \quad \forall g \in
U_h.
\]

We present a 3D example below to illustrate the implementation of
reconstruction process, while the details for 1D implementation and 2D
implementation can be found in \cite{li2017discontinuous} and
\cite{li2018finite}, respectively. We consider a linear reconstruction
on a cubic domain $[0,1]^3$. The domain is partitioned into
quasi-uniform tetrahedron elements using {\it
  Gmsh}~\cite{geuzaine2009gmsh}, which is shown in Figure
\ref{tetra_mesh}. We take element $K_0$ as an instance (see Figure
\ref{tetra_mesh}). The number of degrees of freedom demanded by linear
reconstruction is $4$. Therefore, the $\# S(K_0)$ could be taken as
$5$. In this case, the element patch is containing the element itself
and 4 Von Neumann neighbors coincidentally. Figure \ref{tetra_patch}
shows the geometry of the element patch and the corresponding sampling
nodes. The element patch $S(K_0)$ is chosen as
\[
S(K_0)=\left\{K_0,K_1,K_2,K_3,K_4\right\},
\]
and the sampling nodes are as follows,
\[
\mc I_{K_0} = \left\{ (x_{K_{i}}, y_{K_{i}}, z_{K_{i}}),\quad
i=0,1,2,3,4  \right\}.
\]

\begin{figure}
  \begin{center}
    \includegraphics[width=0.48\textwidth]{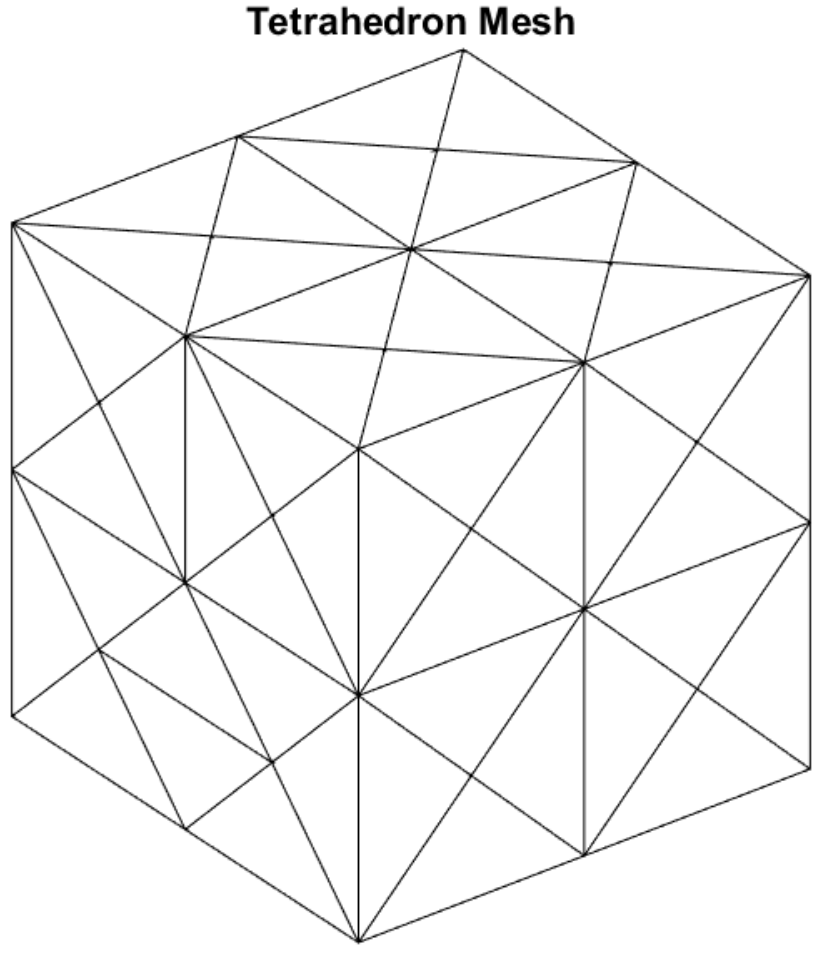}
    \includegraphics[width=0.48\textwidth]{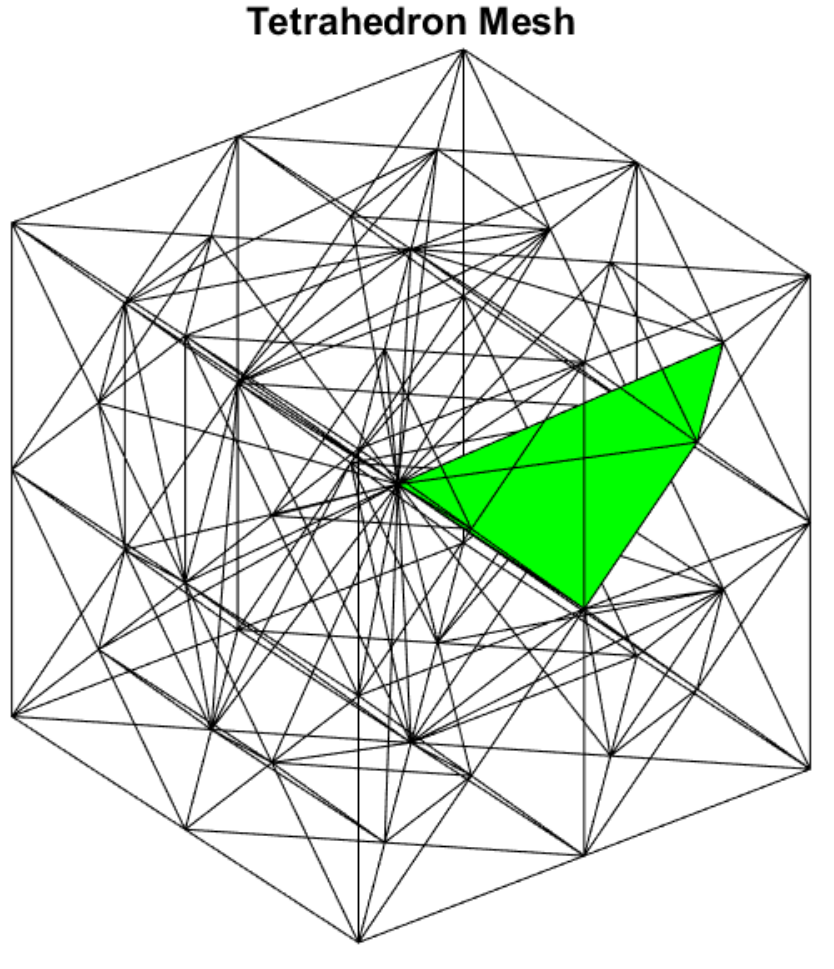}
    \caption{The tetrahedron mesh (left) and the element
    $K_0$(right).}
    \label{tetra_mesh}
  \end{center}
\end{figure}

\begin{figure}
  \begin{center}
    \includegraphics[width=0.48\textwidth]{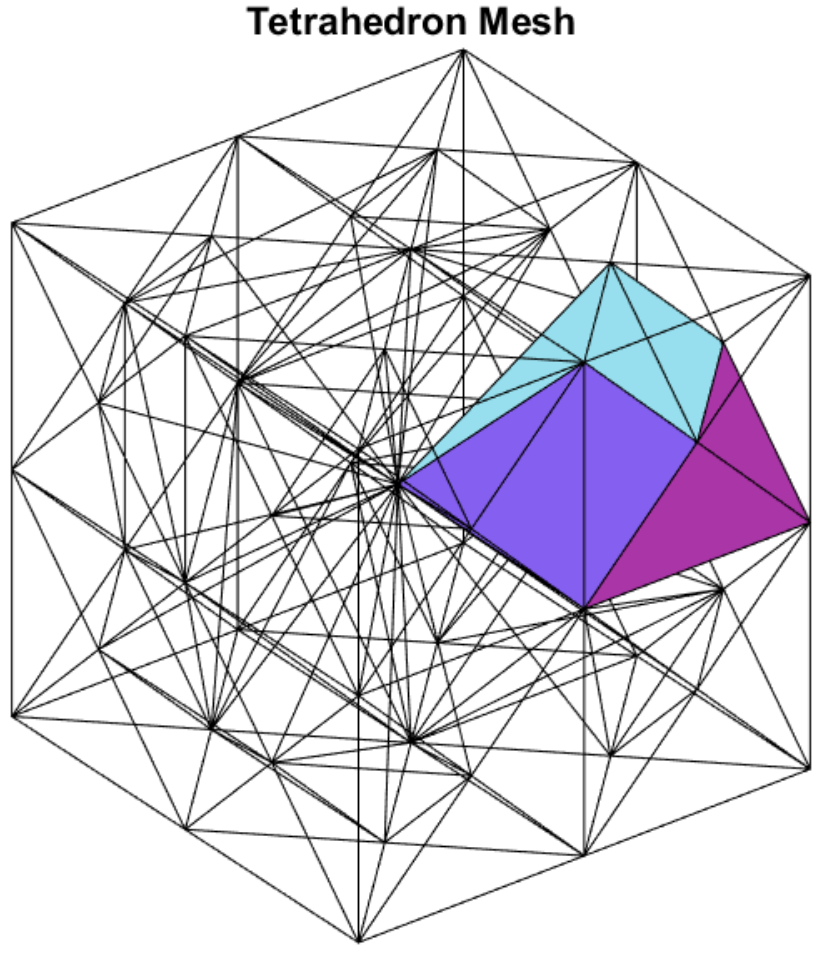}
    \includegraphics[width=0.48\textwidth]{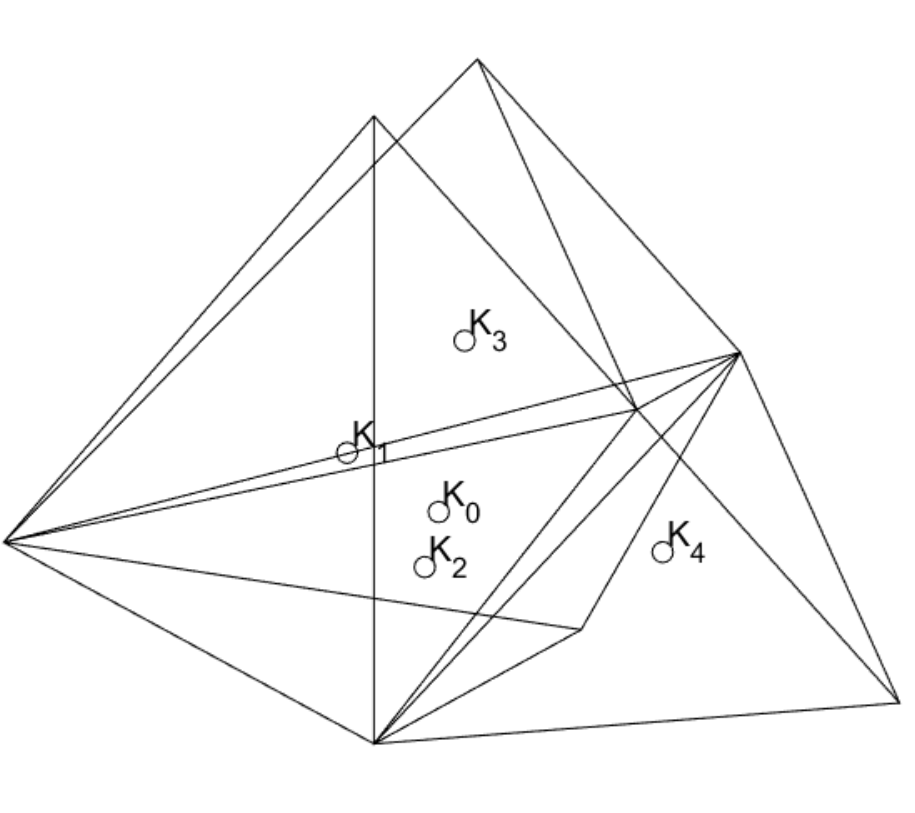}
    \caption{The shape of element patch (left) and the perspective
      view of element patch and sampling nodes (right).}
    \label{tetra_patch}
  \end{center}
\end{figure}

For any continuous function $g$,  we consider the linear
approximation for an illustration. For the polynomial degree $m =1$, 
the least
squares problem \eqref{eq:leastsquares} is specified as
\begin{displaymath}
  \mc R_{K_0} g = \mathop{\arg \min}_{ (a, b, c, d) \in \mathbb R}
  \sum_{i=0}^{4} |g(x_{K_{i}},y_{K_{i}},z_{K_{i}}) - (a + bx_{K_{i}} +
  cy_{K_{i}}+ d z_{K_{i}})|^2.
\end{displaymath}
The solution of the problem is given by the generalized inverse of
matrix,
\begin{displaymath}
[a,b,c,d]^{T}=(A^TA)^{-1}A^T q,
\end{displaymath}
where $A$ and $q$ are
\begin{displaymath}
  A = \begin{bmatrix} 1 & x_{K_{0}} & y_{K_{0}} & z_{K_{0}} \\ 1 &
    x_{K_{1}} & y_{K_{1}} & z_{K_{1}} \\ 1 & x_{K_{2}} & y_{K_{2}} &
    z_{K_{2}} \\ 1 & x_{K_{3}} & y_{K_{3}} & z_{K_{3}} \\ 1 &
    x_{K_{4}} & y_{K_{4}} & z_{K_{4}}
  \end{bmatrix}, \quad 
  q = \begin{bmatrix} g(x_{K_{0}},y_{K_{0}},z_{K_{0}})
    \\ g(x_{K_{1}},y_{K_{1}},z_{K_{1}})
    \\ g(x_{K_{2}},y_{K_{2}},z_{K_{2}})
    \\ g(x_{K_{3}},y_{K_{3}},z_{K_{3}})
    \\ g(x_{K_{4}},y_{K_{4}},z_{K_{4}})
  \end{bmatrix}.
\end{displaymath}

A direct observation is that matrix $(A^T A)^{-1}A^T$ is not relevant
to the interpolation function $g$. Moreover, the matrix $(A^T
A)^{-1}A^T$ actually stores the polynomial basis function coefficients
corresponding to $\psi_{K_i}$, $i=0, \cdots, 4$. All the basis
functions and the finite element space $V_h$ are determined after the
reconstruction process on each element $\forall K\in
\MTh$. Clearly, the basis functions are discontinuous across the
interface.

 Next, for completeness, we report the results on the
properties of the reconstruction operator. Following
\cite{li2012efficient}, we first make the following assumption.

{\bf Assumption A}\; For any $K\in\MTh$ and $g\in\mb{P}^m(S(K))$,
\begin{equation}\label{assumption:uniqueness}
g|_{\mc{I}(K)}=0\quad\text{implies}\quad g|_{S(K)}\equiv 0.
\end{equation}

This assumption implies the uniqueness for least squares problem
\eqref{eq:leastsquares}. A necessary condition for {\bf Assumption A}
is that the number $\# \mc{I}_K$ needs to be greater than
$\text{dim}(\mb{P}^m)$, whose quantities are $m+1$, $(m+1)(m+2)/2$ and
$(3m^2+3m+2)/2$ corresponding to 1D,2D and 3D, respectively. A
constant $\Lambda(m,\mathcal{I}_K)$ is defined as
\cite{li2012efficient}:
\begin{equation}\label{eq:cons}
\Lambda(m, \mathcal{I}_K){:}=\max_{p\in \mathbb{P}^m(S(K))}
\dfrac{\nm{p}{L^\infty(S(K))}}{\nm{p|_{\mc{I}_K}}{\ell_\infty}}.
\end{equation}
Then, the uniform upper bound can be obtained by adding some
constrains on element patches and the partition, see also \cite{
  li2016discontinuous} for the details. We have the following
properties of the reconstruction operator $\mc{R}_K$.
\begin{lemma}\label{theorem:localapp}
\cite[Theorem 3.3]{li2012efficient} If {\em Assumption A} holds, then
there exists a unique solution to~\eqref{eq:leastsquares}. Moreover
$\mc{R}_{K}$ satisfies
  \begin{equation}\label{eq:invariance}
    \mc{R}_Kg=g\quad\text{for all\quad}g\in\mb{P}^m(S(K)).
  \end{equation}
  The stability property holds true for any $K\in\MTh$ and $g\in
  C^0(S(K))$ as
  \begin{equation}\label{eq:continuous}
    \nm{\mc{R}_K g}{L^{\infty}(K)}\le\Lambda(m , \mc{I}_K) \sqrt{\#
      \mc{I}_K}\nm{g|_{\mc{I}(K)}}{\ell_\infty},
  \end{equation}
  and the quasi-optimal approximation property is valid in the sense
  \begin{equation}\label{eq:approximation}
    \nm{g -\mc{R}_K g}{L^{\infty}(K)}\le\Lambda_m
    \inf_{p\in\mb{P}^m(S(K))} \nm{g - p}{L^{\infty}(S(K))}, \quad
    \forall K \in \MTh,
  \end{equation}
  where $\Lambda_m{:}=\max_{K\in \MTh}
  \{1+\Lambda(m,\mathcal{I}_K)\sqrt{\# \mathcal{I}_K}\}$.
\end{lemma}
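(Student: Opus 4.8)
The plan is to reduce the whole statement to the linear algebra of the least-squares system \eqref{eq:leastsquares} and then assemble the three conclusions in order. Fix $K\in\MTh$, pick a basis $\{\phi_1,\dots,\phi_M\}$ of $\mb{P}^m$ with $M=\dim(\mb{P}^m)$, and form the sampling matrix $A\in\mathbb{R}^{\#\mc{I}_K\times M}$ with entries $A_{ij}=\phi_j(x_i)$, $x_i\in\mc{I}_K$. Writing $p=\sum_j c_j\phi_j$, problem \eqref{eq:leastsquares} is the unconstrained least-squares problem $\min_c\|\,g|_{\mc{I}_K}-Ac\,\|_{\ell_2}^2$, whose minimizers are exactly the solutions of the normal equations $A^\top A c=A^\top g|_{\mc{I}_K}$. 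A minimizer always exists, and it is unique precisely when $A$ has trivial kernel. Now $Ac=0$ says the polynomial $p=\sum_j c_j\phi_j$ vanishes on every node of $\mc{I}_K$, so Assumption A \eqref{assumption:uniqueness} forces $p\equiv 0$ on $S(K)$ and hence $c=0$. This establishes existence and uniqueness. The invariance \eqref{eq:invariance} is then immediate: for $g\in\mb{P}^m(S(K))$ the admissible choice $p=g$ gives residual zero, so by uniqueness $\mc{R}_K g=g$.

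For the stability \eqref{eq:continuous}, the key observation I would make is that the vector of reconstructed nodal values $(\mc{R}_K g)|_{\mc{I}_K}=Ac$ is precisely the $\ell_2$-orthogonal projection of the data vector $g|_{\mc{I}_K}$ onto the range of $A$. Since an orthogonal projection is a contraction in $\ell_2$, this yields $\|\,(\mc{R}_K g)|_{\mc{I}_K}\,\|_{\ell_2}\le\|\,g|_{\mc{I}_K}\,\|_{\ell_2}$. Estimate \eqref{eq:continuous} then follows by chaining: the definition of $\Lambda(m,\mc{I}_K)$ in \eqref{eq:cons} applied to the polynomial $\mc{R}_K g$, together with $\nm{\mc{R}_K g}{L^\infty(K)}\le\nm{\mc{R}_K g}{L^\infty(S(K))}$ because $K\subset S(K)$; the contraction above; and the elementary bounds $\|\cdot\|_{\ell_\infty}\le\|\cdot\|_{\ell_2}\le\sqrt{\#\mc{I}_K}\,\|\cdot\|_{\ell_\infty}$ on $\mathbb{R}^{\#\mc{I}_K}$.

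The approximation property \eqref{eq:approximation} is a standard Lebesgue-constant argument built on the first two parts. For an arbitrary $p\in\mb{P}^m(S(K))$, I would use linearity and invariance to write $g-\mc{R}_K g=(g-p)-\mc{R}_K(g-p)$, apply the triangle inequality on $K$, bound the first term by $\nm{g-p}{L^\infty(S(K))}$, and bound the second by the stability estimate \eqref{eq:continuous} applied to $g-p$, using $\|\,(g-p)|_{\mc{I}_K}\,\|_{\ell_\infty}\le\nm{g-p}{L^\infty(S(K))}$. This produces the constant $1+\Lambda(m,\mc{I}_K)\sqrt{\#\mc{I}_K}$; taking the infimum over $p$ and then the maximum over $K\in\MTh$ gives exactly $\Lambda_m$.

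As stated, the lemma carries no serious obstacle: with $\Lambda_m$ treated as the defined quantity, all three estimates are elementary. The one step that rewards care is the stability bound, where cleanly identifying the nodal map as an $\ell_2$-orthogonal projection is what delivers the contraction and keeps the chain free of hidden constants. The genuinely hard analysis---showing that $\Lambda(m,\mc{I}_K)$, and therefore $\Lambda_m$, stays bounded uniformly in $h$ under the shape-regularity assumptions A1--A2, which is what makes the estimate useful for the downstream finite element error analysis---is not needed for the lemma itself and is the content relegated to \cite{li2012efficient,li2016discontinuous}.
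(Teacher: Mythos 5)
Your proof is correct. The paper itself gives no proof of this lemma---it is quoted verbatim from \cite[Theorem 3.3]{li2012efficient}---and your argument (normal equations plus Assumption A for uniqueness, reproduction of polynomials for \eqref{eq:invariance}, the $\ell_2$-projection contraction combined with the definition \eqref{eq:cons} of $\Lambda(m,\mc{I}_K)$ for \eqref{eq:continuous}, and the standard Lebesgue-constant decomposition $g-\mc{R}_K g=(g-p)-\mc{R}_K(g-p)$ for \eqref{eq:approximation}) is exactly the standard argument underlying that citation, with no gaps.
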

With Lemma \ref{theorem:localapp} and the interpolation result in
\cite{DupontScott:1980}, the local estimates on element $K$ can be
obtained.

\begin{lemma} \label{theorem:normapp}
  \cite[Lemma 2.4]{li2016discontinuous} Let $u \in C^0
  \left(\Omega \right) \cap H^{m+1}(\Omega)$, then there exists a
  constant $C$ that depends on $N_s$ and $\sigma$, but independent of
  $h$, such that
  \begin{equation}\label{eq:l2app}
    \nm{g-\mc{R} g}{L^{2}(K)}\le C\Lam_m h_Kd_K^m\snm{g}{H^{m+1}(K)},
  \end{equation}
  and
  \begin{equation}\label{eq:h1app}
    \nm{\na(g-\mathcal{R} g)}{L^2(K)} \le
    C\Lr{h_K^m+\Lam_{m}d_K^m}\snm{g}{H^{m+1}(K)}.
  \end{equation}
\end{lemma}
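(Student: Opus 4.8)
The plan is to prove both inequalities locally on a fixed element $K$, combining the stability and quasi-optimality of $\mc R_K$ from Lemma \ref{theorem:localapp} with the classical polynomial approximation result of Dupont--Scott on the patch $S(K)$, and then converting the resulting $L^\infty$ bounds into $L^2$ and $H^1$ norms. Throughout I would exploit three consequences of the regularity assumptions {\bf A1}--{\bf A2}: the inclusion $K\subseteq S(K)$ gives $h_K\le d_K$; the volume bound $|K|\le Ch_K^D$ holds; and since $S(K)$ agglomerates a fixed number of shape-regular triangles, the Bramble--Hilbert / Dupont--Scott estimates are available on $S(K)$ with constants depending only on $N_s$ and $\sigma$. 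I would also use the Sobolev embedding $H^{m+1}\hookrightarrow L^\infty$, valid since $m+1>D/2$ for $D\le 3$, in the scaled form (obtained by a reference-patch scaling argument)
\[
\inf_{p\in\mb P^m(S(K))}\nm{g-p}{L^\infty(S(K))}\le C\,d_K^{\,m+1-D/2}\snm{g}{H^{m+1}(S(K))}.
\]

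For the $L^2$ estimate \eqref{eq:l2app} I would start directly from the quasi-optimal bound \eqref{eq:approximation}, giving $\nm{g-\mc R g}{L^\infty(K)}\le\Lam_m\inf_{p}\nm{g-p}{L^\infty(S(K))}$, and insert the scaled Dupont--Scott bound above. Passing from $L^\infty(K)$ to $L^2(K)$ via $\nm{\cdot}{L^2(K)}\le|K|^{1/2}\nm{\cdot}{L^\infty(K)}\le Ch_K^{D/2}\nm{\cdot}{L^\infty(K)}$ then produces the factor $h_K^{D/2}d_K^{\,m+1-D/2}$. This collapses to $h_Kd_K^m$: for $D=2$ the exponents coincide, while for $D=3$ one writes $h_K^{D/2}d_K^{\,m+1-D/2}=h_Kd_K^m\,(h_K/d_K)^{D/2-1}\le h_Kd_K^m$ using $h_K\le d_K$, which yields \eqref{eq:l2app}.

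For the gradient estimate \eqref{eq:h1app} I would instead decompose, using that any $p\in\mb P^m$ is reproduced by $\mc R_K$ (equation \eqref{eq:invariance}),
\[
\na(g-\mc R g)=\na(g-\Pi_K g)-\na\mc R_K(g-\Pi_K g)\quad\text{on }K,
\]
where $\Pi_K g\in\mb P^m$ is the averaged Taylor polynomial of $g$ on $K$. The first term is bounded by Dupont--Scott on $K$ by $Ch_K^m\snm{g}{H^{m+1}(K)}$, producing the $h_K^m$ contribution with the seminorm genuinely over $K$. For the second term, $\mc R_K(g-\Pi_K g)$ is a degree-$m$ polynomial on $K$, so I would apply an inverse inequality $\nm{\na w}{L^2(K)}\le Ch_K^{-1}\nm{w}{L^2(K)}$, then the $L^2$--$L^\infty$ conversion and the stability bound \eqref{eq:continuous}, and finally estimate the nodal error by $\nm{(g-\Pi_K g)|_{\mc I_K}}{\ell_\infty}\le\nm{g-\Pi_K g}{L^\infty(S(K))}\le Cd_K^{\,m+1-D/2}\snm{g}{H^{m+1}(S(K))}$. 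Collecting powers gives $h_K^{D/2-1}d_K^{\,m+1-D/2}\le d_K^m$ (again by $h_K\le d_K$), times $\Lam_m$, i.e.\ the $\Lam_m d_K^m$ term, establishing \eqref{eq:h1app}.

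The step I expect to be the main obstacle is the exponent bookkeeping that turns the mixed powers $h_K^{D/2}d_K^{\,m+1-D/2}$ and $h_K^{D/2-1}d_K^{\,m+1-D/2}$ into the clean factors $h_Kd_K^m$ and $d_K^m$; this is precisely where the inclusion $h_K\le d_K$, the fixed cardinality of $S(K)$, and the dimension $D$ (through the Sobolev embedding) all enter. A secondary point requiring care is that the stability-driven term naturally carries $\snm{g}{H^{m+1}(S(K))}$ rather than $\snm{g}{H^{m+1}(K)}$, since the reconstruction samples $g$ over the whole patch; identifying it with the element seminorm as written relies on $S(K)$ being a fixed-size agglomeration of comparable elements (so $d_K\sim h_K$) together with the uniformly bounded overlap of the patches when the local estimates are summed over $\MTh$.
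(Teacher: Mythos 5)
Your argument is correct and follows exactly the route the paper itself indicates for this lemma (it is stated as a citation, with the remark that it follows from Lemma \ref{theorem:localapp} combined with the Dupont--Scott interpolation result): quasi-optimality \eqref{eq:approximation} plus the scaled Bramble--Hilbert bound on $S(K)$ for the $L^2$ estimate, and the reproduction property \eqref{eq:invariance}, an inverse inequality, and the stability bound \eqref{eq:continuous} for the gradient estimate, with the exponent bookkeeping $h_K^{D/2}d_K^{m+1-D/2}\le h_Kd_K^m$ and $h_K^{D/2-1}d_K^{m+1-D/2}\le d_K^m$ handled correctly via $h_K\le d_K$. The one caveat, which you already flag yourself, is that the argument naturally yields $\snm{g}{H^{m+1}(S(K))}$ rather than $\snm{g}{H^{m+1}(K)}$ on the right-hand side; this is how the estimate appears in the cited source, and the element-wise seminorm written in the statement here should be read that way (the discrepancy is harmless when the local bounds are summed over $\MTh$, by the bounded overlap of the patches).
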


%
%%% Local Variables:
%%% mode: latex
%%% TeX-master: "eigenvalue"
%%% End:

% vim:spell:tw=70:fo+=Mn:cc=70
\section{Elliptic  Eigenvalue Problems}\label{sec:weakform}
Let us consider the $2p$-th $(p=1,2)$ order elliptic eigenvalue
problems, for $p=1$, the second order elliptic eigenvalue problem
reads:
\begin{equation}\label{eq:elliptic}
\left\{
\begin{array}{ll}
  - \Delta u = \lambda u, & \text{in } \Omega,\\ [1.5ex] u =0, &
  \text{on } \partial \Omega,
\end{array}\right.
\end{equation}
and the corresponding weak form is: find $\lambda \in \mathbb{R}$ and
$ u\in V=H^{1}_{0}(\Omega)$, with $ u \neq 0$, such that
\begin{equation*}
  a(u,v)=\lambda(u,v),\quad \forall v\in V,
\end{equation*}
where $a(u,v)=\int_{\Omega} \na u \cdot \na v\dx$ and $(u, v) :=
\int_\Omega u v \dx$.

For $p=2$, the biharmonic eigenvalue problem reads:
\begin{equation}\label{eq:harmonic}
\left\{
\begin{array}{ll}
\Delta^2 u = \lambda u, & \text{in } \ \Omega,\\[1.5ex] u
=\dfrac{\partial u}{\partial \un}=0, & \text{on } \partial \Omega,
\end{array}\right.
\end{equation}
and the corresponding weak form is: find $\lambda \in \mathbb{R}$ and
$ u\in V=H^{2}_{0}(\Omega)$, with $ u \neq 0$, such that
\begin{equation*}
  a(u,v)=\lambda(u,v),\quad \forall v\in V,
\end{equation*}
where $a(u,v)=\int_{\Omega} \Delta u \Delta v\dx$.

The discretized variational problem for equations \eqref{eq:elliptic}
and \eqref{eq:harmonic} reads: find $\lambda_h \in \mathbb{R}$ and $
u_h\in U_h$, with $ u_h \neq 0$, such that
\begin{equation}\label{eq:weak_form}
a_{h}(\mc{R} u_h,\mc{R}v_h)=\lambda_h(\mc{R} u_h, \mc{R} v_h),\quad
\forall v_h \in U_h.
\end{equation}
Here we use the notations $a$, $a_h$ for unification. In the rest of
the paper, we will specify the sense of the notation when a particular
equation is considered.

The symmetric interior penalty method is employed to discretize the
elliptic operators. For the second order elliptic operator,
$a_h(\cdot, \cdot)$ is
\begin{equation}\label{elliptic_operator}
  \begin{aligned}
    a_h(v,w){:}=&\sum_{K\in \MTh}\int_{K}\na v\cdot\na
    w\dx\\ &-\sum_{e\in \mc{E}_h}\int_{e}\Lr{\jump{\na
        v}\aver{w}+\jump{\na w}\aver{v}}\ds\\ &+\sum_{e\in
      \mc{E}_h}\int_{e}\eta_eh_e^{-1}\jump{v}\cdot\jump{w}\ds,
  \end{aligned}
\end{equation}
and for the biharmonic operator, $a_h(\cdot, \cdot)$ is
\begin{equation}\label{biharmonic_operator}
    \begin{aligned}
    a_h(v, w){:}=&\sum_{K\in \MTh}\int_{K}\Delta v\Delta
    w\dx\\ &+\sum_{e\in \mc{E}_h}\int_{e}\Lr{ \jump{v}
      \aver{\nabla\Delta w} +\jump{w} \aver{\nabla\Delta v} } \ds
    \\ &-\sum_{e\in \mc{E}_h}\int_{e}\Lr{ \aver{\Delta w}\jump{\nabla
        v}+\aver{\Delta v} \jump{\nabla w} } \ds\\ &+\sum_{e\in
      \mc{E}_h}\int_{e}\Lr{ \alpha_e h_e^{-3 }\jump{v} \cdot \jump{w}
      +\beta_e h_e^{-1} \jump{\nabla u} \jump{\nabla v} } \ds,
  \end{aligned}
\end{equation}
where $\eta_e,\alpha_e,\beta_e$ are positive constants. Here we let
$\mc{E}_h$ denote the collection of all the faces of $\MTh$,
$\mc{E}_h^i$ denote the collection of the interior faces. The set of
boundary faces is denoted as $\mc{E}_h^b$, and then
$\mc{E}_h=\mc{E}_h^i\cup \mc{E}_h^b$.  Let $e$ be an interior face
shared by two neighbouring elements $K^+, K^-$, and $\bm{\mathrm n}^+$
and $\bm{\mathrm n}^-$ denote the corresponding outward unit normal.
For the scalar-valued function $q$ and the vector-valued function
$\bm{v}$, the \emph{average} operator $\aver{\cdot}$ and the
\emph{jump} operator $\jump{\cdot}$ are defined as
\begin{displaymath}
  \{q\}=\frac12(q^++q^-), \quad\{\bm v\}=\frac12(\bm v^++\bm v^-),
\end{displaymath}
and
\begin{displaymath}
  \lj q\rj=\bm{\mathrm n^+}q^++\bm{\mathrm n^-}q^-, \quad\lj\bm v\rj=
  \bm{\mathrm n^-}\cdot\bm v^++\bm{\mathrm n^-}\cdot\bm v^-.
\end{displaymath}
Here $q^+=q|_{K^+}$, $\bm v^+=\bm v|_{K^+}$ and $q^-=q|_{K^-}$, $\bm
v^-=\bm v|_{K^-}$. For $e\in\mc{E}_h^b$, we set
\begin{displaymath}
  \{q\}=q|_{K},\quad \lj q\rj=\bm{\mathrm n}q|_{K},
\end{displaymath}
and
\begin{displaymath}
  \{\bm v\}=\bm v|_{K},\quad \lj\bm v\rj=\bm{\mathrm n}\cdot\bm
  v|_{K}.
\end{displaymath}

We note that the problem \eqref{eq:weak_form} is equivalent to the
following problem: find $\lambda_h \in \mathbb{R}$ and $\varphi_h\in
V_h$, with $ \varphi_h \neq 0$, such that
\[
a_h(\varphi_h,\psi_h)=\lambda_h(\varphi_h,\psi_h),\quad \forall
\psi_h\in V_h.
\]
This is a more standard formulation for finite element methods. By the
formulation \eqref{eq:weak_form}, it is emphasized that the number of
DOFs of the approximation space is always $\text{dim}(U_h)$.

We define the energy norms $\|\cdot\|_{h}$ and $\enernm{\cdot}_{h}$
for any $v\in V_h=\mc{R}(U_h)$ as:
\begin{equation}\label{eq:energy_norms}
\begin{aligned}
\|v\|_{h}^2&=\sum_{K\in \MTh}\nm{\na v}{L^2(K)}^{2} + \sum_{e\in
  \mc{E}_h}h_e^{-1}\nm{\jump{v}}{L^2(e)}^2,\\ 
  \enernm{v}_{h}^2&=\sum_{K\in \MTh}\nm{\Delta v}{L^2(K)}^{2} +
  \sum_{e\in \mc{E}_h}h_e^{-3}\nm{\jump{v}}{L^2(e)}^2 + \sum_{e\in
  \mc{E}_h}h_e^{-1}\nm{\jump{\na v}}{L^2(e)}^2.
\end{aligned}
\end{equation}

From the Lemma \ref{theorem:normapp} and Agmon inequality, the
following interpolation estimates are straightforward results for
the reconstruction operator in the energy norm.

\begin{lemma}\label{lemma:approximate_energy_error}
  \cite[Equation 3.4]{li2016discontinuous} \cite[Theorem
    2.1]{li2017discontinuous} Let $u\in H^{m+1}(\Omega)$, and
      $\mc{R}u\in V_h$ be the interpolation polynomial of $u$, there
      exists a constant $C$ that depends on $N_s$, $\sigma$ and $m$,
      but independent of $h$, such that
  \begin{equation}\label{eq:approx_energy_error}
    \begin{split}
      \|u-\mc{R}u \|_h \leq & C (h^{m}+\Lambda_{m}d^{m}) | u
      |_{H^{m+1}(\Omega)} ,\\ \enernm{u-\mc{R}u}_{h}\leq & C
      (h^{m-1}+\Lambda_{m}d^{m-1}) | u |_{H^{m+1}(\Omega)}.
    \end{split}
  \end{equation}
\end{lemma}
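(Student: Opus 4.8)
The plan is to establish both inequalities by decomposing each energy norm in \eqref{eq:energy_norms} into its element-volume part and its face-jump parts, bounding the volume parts directly by the local estimates of Lemma \ref{theorem:normapp}, and converting the face parts into volume quantities through the Agmon (scaled trace) inequality. Throughout I would write $w:=u-\mc{R}u$. The volume contributions $\sum_{K\in\MTh}\nm{\na w}{L^2(K)}^2$ and $\sum_{K\in\MTh}\nm{\Delta w}{L^2(K)}^2$ are handled at once: the first by \eqref{eq:h1app}, and the second by the natural second-order analogue $\nm{\Delta w}{L^2(K)}\le C(h_K^{m-1}+\Lam_m d_K^{m-1})\snm{u}{H^{m+1}(K)}$, which is precisely the higher-derivative local estimate supplied by \cite[Theorem 2.1]{li2017discontinuous}. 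Summing these over $K\in\MTh$ already reproduces the advertised powers $h^m+\Lam_m d^m$ and $h^{m-1}+\Lam_m d^{m-1}$, so the essential remaining work is the jump terms. Note that the first estimate needs only the $L^2$ and $H^1$ local bounds of Lemma \ref{theorem:normapp}, whereas the triple-bar estimate additionally invokes the $H^2$ bound from the biharmonic reference.

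For the jump terms the key observation is that $u$ is globally continuous and, in the biharmonic case, $u\in H^2(\Om)$ so that $\na u$ likewise has single-valued traces; hence $\jump{u}$ and $\jump{\na u}$ vanish on every interior face, and on each boundary face the jump collapses to a single one-sided trace. Consequently, on any face $e\subset\pa K$, both $\jump{w}$ and $\jump{\na w}$ are controlled by the one-sided traces of $w$ and $\na w$, to which I would apply the scaled Agmon inequality $\nm{v}{L^2(e)}^2\le C\bigl(h_K^{-1}\nm{v}{L^2(K)}^2+h_K\nm{\na v}{L^2(K)}^2\bigr)$. Taking $v=w$, multiplying by $h_e^{-1}$ (respectively $h_e^{-3}$), and using $h_e\simeq h_K$ yields $h_e^{-1}\nm{\jump{w}}{L^2(e)}^2\le C\bigl(h_K^{-2}\nm{w}{L^2(K)}^2+\nm{\na w}{L^2(K)}^2\bigr)$ together with the corresponding $h_e^{-3}$ bound, while taking $v=\na w$ gives $h_e^{-1}\nm{\jump{\na w}}{L^2(e)}^2\le C\bigl(h_K^{-2}\nm{\na w}{L^2(K)}^2+\nm{\Delta w}{L^2(K)}^2\bigr)$. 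The local quantities $\nm{w}{L^2(K)}$, $\nm{\na w}{L^2(K)}$ and $\nm{\Delta w}{L^2(K)}$ appearing here are then supplied by \eqref{eq:l2app}, \eqref{eq:h1app} and the second-order estimate.

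The one point requiring care is matching the powers of $h_K$ and $d_K$. For the first norm this is clean: substituting \eqref{eq:l2app} into the jump term gives $h_K^{-2}\nm{w}{L^2(K)}^2\le C\Lam_m^2 d_K^{2m}\snm{u}{H^{m+1}(K)}^2$, with no spurious negative power of $h_K$ surviving. In the triple-bar norm, however, the worst contributions — coming from $h_K^{-4}\nm{w}{L^2(K)}^2$ and from the $\Lam_m$ part of $h_K^{-2}\nm{\na w}{L^2(K)}^2$ — both leave the factor $h_K^{-2}d_K^{2m}$, which equals the target $d_K^{2m-2}$ only after one invokes that the patch $S(K)$ is the agglomeration of a fixed number of shape-regular elements, so that $d_K\simeq h_K\simeq h_e$ with constants depending only on $N_s$ and $\sigma$. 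This reduction of the patch diameter $d_K$ to the local mesh size is the main (and essentially the only) obstacle; once it is in hand, summing all element and face contributions and using $\sum_{K\in\MTh}\snm{u}{H^{m+1}(K)}^2=\snm{u}{H^{m+1}(\Om)}^2$ completes both estimates, with $C$ depending on $N_s$, $\sigma$ and $m$ through the Agmon and local approximation constants and through $\Lam_m$.
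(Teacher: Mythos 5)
Your argument is correct and is essentially the route the paper itself takes: the paper offers no proof beyond the citations and the remark that the estimates follow ``from Lemma \ref{theorem:normapp} and Agmon inequality,'' which is exactly the volume/jump decomposition you carry out, including the one genuinely load-bearing observation that the patch consists of a fixed number of shape-regular elements so that $d_K\simeq h_K\simeq h_e$, which absorbs the leftover negative powers of $h_K$ in the $\enernm{\cdot}_h$ bound. One cosmetic correction: the scaled trace inequality applied to $\na w$ produces the full Hessian seminorm $\snm{w}{H^2(K)}$ on the right-hand side rather than $\nm{\Delta w}{L^2(K)}$, but the local second-order estimate you import from the biharmonic reference controls the full $H^2$ seminorm, so the conclusion is unaffected.
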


Next, the boundedness and coercivity of the bilinear operator
$a_h(\cdot,\cdot)$ in \eqref{elliptic_operator} and
\eqref{biharmonic_operator} are as below.

\begin{lemma}\label{lemma:bilinear_operator_property}
\cite[Equations 4.4,4.10]{arnold2002unified} If the penalty
constant $\eta_e$ is sufficiently large, then the bilinear operator
\eqref{elliptic_operator} is bounded and coercive, indeed there exist constants $C_b$
and $C_s$, such that
\begin{equation}\label{eq:elliptic_property}
\begin{split}
 a_h(\mc{R}v_h,\mc{R}v_h)&\geq C_b \|\mc{R}v_h\|_{h}^2, \quad \forall
 v_h \in U_h,\\ a_h(\mc{R}u_h,\mc{R}v_h)&\leq C_s \|\mc{R}u_h\|_{h}
 \|\mc{R}v_h\|_{h}, \quad \forall u_h, v_h \in U_h.
\end{split}
\end{equation}
\cite[Lemmata 3.1, 3.2]{li2017discontinuous} If the penalty constants
$\alpha_e,\beta_e$ are sufficiently large , then there exist constants
$C_b$ and $C_s$, such that the bilinear operator
\eqref{biharmonic_operator} satisfies
\begin{equation}\label{eq:biharmonic_property}
\begin{split}
 a_h(\mc{R}v_h,\mc{R}v_h)&\geq C_b \enernm{\mc{R}v_h}_{h}^2, \quad
 \forall v_h \in U_h,\\ a_h(\mc{R}u_h,\mc{R}v_h)&\leq C_s
 \enernm{\mc{R}u_h}_{h} \enernm{\mc{R}v_h}_{h}, \quad \forall u_h, v_h
 \in U_h.
\end{split}
\end{equation}
\end{lemma}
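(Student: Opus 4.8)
The plan is to reduce both claims to the standard stability theory of symmetric interior penalty discontinuous Galerkin methods, exploiting the structural fact that every reconstructed function $\mc{R}v_h$ is a piecewise polynomial of degree $m$. By construction $V_h=\mc{R}(U_h)$ is a subspace of the fully discontinuous space $\set{v\in L^2(\Om)}{v|_K\in\mb{P}^m(K),\ \forall K\in\MTh}$, so all the polynomial inverse and trace inequalities underlying the analyses in \cite{arnold2002unified,li2017discontinuous} remain available on $V_h$. The shape-regularity hypotheses \textbf{A1}--\textbf{A2}, through the constants $N_s$ and $\sigma$, guarantee that these inequalities hold with $h$-independent constants, which is exactly what will let us fix the penalty thresholds once and for all.

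For the second-order operator \eqref{elliptic_operator} I would first establish coercivity. Writing $u:=\mc{R}v_h$, the diagonal value is
\begin{equation*}
a_h(u,u)=\sum_{K\in\MTh}\nm{\na u}{L^2(K)}^2 - 2\sum_{e\in\mc{E}_h}\int_e \jump{\na u}\aver{u}\,\ds + \sum_{e\in\mc{E}_h}\eta_e h_e^{-1}\nm{\jump{u}}{L^2(e)}^2,
\end{equation*}
whose only indefinite contribution is the consistency term. I would bound it via Young's inequality, placing a factor $h_e$ on the normal-derivative trace and $h_e^{-1}$ on the function trace; a discrete trace inequality then controls $h_e\nm{\jump{\na u}}{L^2(e)}^2$ by a constant times $\sum\nm{\na u}{L^2(K)}^2$ over the two elements sharing $e$. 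Absorbing the gradient part into the volume term and the function-trace part into the penalty, and taking $\eta_e$ large enough, yields $a_h(u,u)\ge C_b\|u\|_h^2$. Boundedness is more direct: Cauchy--Schwarz on each term followed by the same trace inequality on the face integrals gives $a_h(u,v)\le C_s\|u\|_h\|v\|_h$ in the norm \eqref{eq:energy_norms}.

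For the biharmonic operator \eqref{biharmonic_operator} the argument is structurally identical but carried out one order higher, in the norm $\enernm{\cdot}_h$. The delicate face term is $\jump{v}\aver{\na\Delta w}$, which exposes a \emph{third}-order trace; here I would first apply the polynomial inverse inequality $\nm{\na\Delta w}{L^2(K)}\le C h_K^{-1}\nm{\Delta w}{L^2(K)}$ and then a trace inequality, so that $h_e^{3}\nm{\aver{\na\Delta w}}{L^2(e)}^2$ is controlled by $\sum_K\nm{\Delta w}{L^2(K)}^2$, matching the $h_e^{-3}$ weight on $\jump{v}$ in $\enernm{\cdot}_h$. The term $\aver{\Delta w}\jump{\na v}$ is treated analogously against the $h_e^{-1}$-weighted gradient-jump part of the norm. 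Young's inequality with a small parameter absorbs all cross terms, and $\alpha_e,\beta_e$ taken sufficiently large restore a strictly positive lower bound, giving \eqref{eq:biharmonic_property}.

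The main obstacle I anticipate is bookkeeping rather than conceptual: tracking the exact powers of $h_e$ so that each face integral is paired with the correctly weighted component of the energy norm, and verifying that the combined trace and inverse constants---which depend only on $m$, $N_s$ and $\sigma$ through \textbf{A1}--\textbf{A2}---stay uniform in $h$, so that a single threshold on the penalty parameters secures coercivity simultaneously on every face.
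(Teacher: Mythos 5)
Your proposal is correct and matches the paper's approach exactly: the paper offers no proof of its own, deferring entirely to \cite{arnold2002unified} and \cite{li2017discontinuous}, and your sketch --- trace and inverse inequalities available on $V_h=\mc{R}(U_h)$ because it sits inside the full piecewise-$\mb{P}^m$ DG space, Young's inequality on the consistency terms, and penalty parameters taken large enough (uniformly in $h$, by \textbf{A1}--\textbf{A2}) to absorb the cross terms --- is precisely the standard SIPG stability argument those references carry out. One small caveat: for the absorption into the penalty term to succeed, the consistency term must pair the \emph{average} of the derivative with the \emph{jump} of the function (the energy norm controls jumps, not averages); as literally typeset in \eqref{elliptic_operator} the roles of jump and average appear swapped, and your argument, like the cited references, implicitly uses the standard pairing.
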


We refer to \cite{arnold2002unified,li2017discontinuous} for the
proof.

To derive the error estimates, we introduce the sum space
$V(h)=V+\mc{R}(U_h)$, and endows it with the energy norm
\eqref{eq:energy_norms}, denoted as $\|\cdot\|_{V(h)}$ for
unification,
\begin{displaymath}
  \|\cdot\|_{V(h)}=
  \begin{cases}
   \|\cdot\|_{h},\ p=1,\\ \enernm{\cdot}_{h},\ p=2.\\
  \end{cases}
\end{displaymath}
Let $\lambda^{(i)}$, $i\in \mathbb{N}$, denote the sequence of
eigenvalues of \eqref{eq:elliptic} and \eqref{eq:harmonic} with the
natural numbering
\begin{displaymath}
\lambda^{(1)}\leq \lambda^{(2)} \leq \cdots \leq \lambda^{(i)}\leq
\cdots,
\end{displaymath}
and the corresponding eigenfunctions with the standard normalization
$\|u^{(i)}\|=1$
\begin{displaymath}
u^{(1)},u^{(2)},\cdots,u^{(i)},\cdots, 
\end{displaymath}
which are orthogonal to each other
\begin{displaymath}
(u^{(i)},u^{(j)})=0, \quad \text{if}\ i\neq j.
\end{displaymath}

Let $N=\text{dim}(V_h)$, thus the discrete eigenvalues of
\eqref{eq:weak_form} can be ordered as follows:
\begin{displaymath}
\lambda^{(1)}_{h}\leq \lambda^{(2)}_{h} \leq \cdots \leq
\lambda^{(N)}_{h},
\end{displaymath}
and the discrete eigenfunctions with the normalization
$\|\mc{R}u^{(i)}_{h}\|=1$,
\begin{displaymath}
\mc{R}u^{(1)}_{h},\mc{R}u^{(2)}_h,\cdots,\mc{R}u^{(N)}_{h},
\end{displaymath}
which satisfy the same orthogonalities
\begin{displaymath}
(\mc{R}u^{(i)}_{h},\mc{R}u^{(j)}_{h})=0, \quad \text{if}\ i\neq j.
\end{displaymath}

The convergence analysis for the eigenvalue problem
\eqref{eq:weak_form} can be obtained by the Babu\v{s}ka-Osborn
theory~\cite{Osborn1991Eigenvalue}. We define the following continuous
and discrete solution operators:
\begin{equation}\label{eq:solution_operator}
\begin{split}
T&:L^{2}(\Omega) \rightarrow V \quad a(Tf,v)=(f,v), \ \forall v\in
V,\\ T_h&:L^{2}(\Omega) \rightarrow \mc{R}(U_h) \quad a_{h}(T_h
f,\mc{R}v)=(f,\mc{R}v), \ \forall v\in U_h.
\end{split}
\end{equation}
Obviously the operator $T$ and $T_h$ are self-adjoint and from the
elliptic regularity, there exists $\epsilon>0 $ such that
\[
\|Tf - T_hf\|_{V(h)}\leq C h^{\epsilon} \|f\|_{L^{2}(\Omega)}.
\]
And the operators have the gradual approximation property,
\begin{equation}\label{solution_operator_approx}
\lim_{h\rightarrow 0} \|T - T_h\|_{\mc{L}(V(h))}=0.
\end{equation}

Let $\sigma(T),\sigma(T_h)$ and $\rho(T),\rho(T_h)$ denote the
spectrum and the resolvent set of the solution operator $T$ and $T_h$,
respectively. Define the resolvent operators as follows
\begin{equation*}
\begin{split}
  R_z(T):=&(z-T)^{-1},\ \forall z\in \rho(T),\quad V\rightarrow
  V,\\ R_z(T_h):=& (z-T_h)^{-1},\ \forall z\in \rho(T),\quad
  \mc{R}(U_h)\rightarrow \mc{R}(U_h).
\end{split}
\end{equation*}
Then the first result of convergence is that there is no pollution of
the spectrum.
\begin{theorem}\label{Non-pollution of the spectrum}
  \cite[Theorem 9.1]{boffi2010finite} Assume the convergence in norm
  \eqref{solution_operator_approx} is satisfied, for any compact set
  $K \subset \rho(T)$, there exists $h_0>0$, such that, for all
  $h<h_0$, we have
  \[
    K \subset \rho(T_h).
  \]
If $\mu \in\sigma(T)$ is a non-zero eigenvalue with algebraic
multiplicity $k$, then exactly $k$ discrete eigenvalues of $T_h$,
convergence to $\mu$ as $h$ tend to zero.
\end{theorem}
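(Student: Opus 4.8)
The plan is to derive both conclusions from the norm convergence \eqref{solution_operator_approx} by means of the Riesz--Dunford functional calculus for the self-adjoint operators $T$ and $T_h$. The underlying principle is that convergence in $\mc{L}(V(h))$ forces uniform convergence of the resolvents, and hence of the associated spectral projections, along any contour that stays away from $\sigma(T)$; once the projections are close in norm, their ranges must have equal dimension, which encodes exactly the correct count of discrete eigenvalues.

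For the non-pollution statement, I would fix a compact set $K\subset\rho(T)$. Since $\rho(T)$ is open and $z\mapsto R_z(T)$ is holomorphic there, $M{:}=\sup_{z\in K}\nm{R_z(T)}{\mc{L}(V(h))}<\infty$. Factoring
\[
z-T_h=(z-T)\bigl(I+R_z(T)(T-T_h)\bigr),
\]
the second factor is invertible by a Neumann series whenever $\nm{R_z(T)(T-T_h)}{\mc{L}(V(h))}<1$. By \eqref{solution_operator_approx} there is $h_0>0$ with $M\,\nm{T-T_h}{\mc{L}(V(h))}<1$ for all $h<h_0$, uniformly over $z\in K$; thus $z-T_h$ is boundedly invertible and $K\subset\rho(T_h)$. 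The same estimate yields $\sup_{z\in K}\nm{R_z(T_h)}{\mc{L}(V(h))}\le M\bigl(1-M\,\nm{T-T_h}{\mc{L}(V(h))}\bigr)^{-1}$, a uniform resolvent bound that I will reuse below.

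For the multiplicity statement, recall that $T$ is compact, so the non-zero eigenvalue $\mu$ is isolated; pick a circle $\Gamma$ centered at $\mu$ enclosing no other point of $\sigma(T)$. Then $\Gamma$ is a compact subset of $\rho(T)$, so $\Gamma\subset\rho(T_h)$ for $h$ small, and I may define the spectral projections
\[
E=\frac{1}{2\pi\mathrm{i}}\oint_\Gamma R_z(T)\,dz,\qquad
E_h=\frac{1}{2\pi\mathrm{i}}\oint_\Gamma R_z(T_h)\,dz,
\]
for which $\dim(\mathrm{Range}\,E)=k$ while $\dim(\mathrm{Range}\,E_h)$ counts the eigenvalues of $T_h$, with algebraic multiplicity, enclosed by $\Gamma$. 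The resolvent identity $R_z(T_h)-R_z(T)=R_z(T_h)(T_h-T)R_z(T)$, combined with the uniform resolvent bounds on $\Gamma$ from the previous step and \eqref{solution_operator_approx}, gives
\[
\nm{E-E_h}{\mc{L}(V(h))}\le\frac{|\Gamma|}{2\pi}\sup_{z\in\Gamma}\nm{R_z(T_h)-R_z(T)}{\mc{L}(V(h))}\longrightarrow 0.
\]

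The main obstacle is the final passage from norm-closeness of the two projections to equality of the dimensions of their ranges. Here I would invoke the classical fact (see Kato's perturbation theory) that if two projections satisfy $\nm{E-E_h}{\mc{L}(V(h))}<1$, then $\mathrm{Range}\,E$ and $\mathrm{Range}\,E_h$ are isomorphic and therefore have equal dimension. Since $\nm{E-E_h}{\mc{L}(V(h))}\to 0$, for $h$ small enough $\dim(\mathrm{Range}\,E_h)=\dim(\mathrm{Range}\,E)=k$, which is precisely the assertion that exactly $k$ discrete eigenvalues of $T_h$ converge to $\mu$ as $h\to 0$.
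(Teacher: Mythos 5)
The paper offers no proof of this theorem---it is quoted verbatim from Boffi's survey [Theorem 9.1]---and your argument is exactly the standard one behind that citation: a Neumann-series perturbation of the resolvent uniformly over a compact subset of $\rho(T)$, then contour-integral spectral projections compared via the resolvent identity, and finally Kato's lemma that two projections at distance less than one in operator norm have ranges of equal dimension. The proof is correct as written; the one point deserving a remark in this nonconforming setting is that the operator norm on $\mc{L}(V(h))$ is mesh-dependent, so the bound $M=\sup_{z\in K}\nm{R_z(T)}{\mc{L}(V(h))}$ should be verified to be uniform in $h$ before concluding $M\,\nm{T-T_h}{\mc{L}(V(h))}<1$ for all small $h$---a subtlety that neither the paper nor Boffi's conforming framework has to confront explicitly.
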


Let $\Gamma$ be an arbitrary closed smooth curve $\Gamma \in \rho(T)$
which encloses $\mu\in\sigma(T)$, and no other elements of
$\sigma(T)$, we define the Riesz spectral projection operators $E$, $E_h$ by:
\begin{equation*}\label{eq:spectral_operator}
\begin{split}
E&:L^{2}(\Omega) \rightarrow V \quad E(\lambda)=\frac{1}{2\pi
  i}\int_{\Gamma} R_z(T)\,dz, \\ E_h&:L^{2}(\Omega) \rightarrow
\mc{R}(U_h) \quad E_{h}(\lambda)=\frac{1}{2\pi i}\int_{\Gamma}
R_z(T_h)\,dz.
\end{split}
\end{equation*}
When $h$ is sufficiently small, we have $\Gamma \in \rho(T_h)$ and
$\Gamma$ encloses exactly $k$ eigenvalues of $T_h$. More precisely,
the dimension of $E(\mu)V$ and $E_{h}(\mu)\mc{R}(U_h)$ is equal to
$k$. Further we have
\begin{equation}\label{eigenspace_approx}
\lim_{h\rightarrow 0} \|E-E_h\|_{\mc{L}(L^2(\Omega),V(h))}=0.
\end{equation}
The convergence of the generalized eigenvectors has been claimed.

The gap between the eigenspaces is defined as follows,
\begin{align*}
\delta(E,F)&=\sup_{u\in E,\|u\|=1} \inf_{v\in F} \|u-v\|
,\\ \hat{\delta}(E,F)&= \max(\delta(E,F),\delta(F,E)) .
\end{align*}

\begin{lemma}\label{non-pollution of eigenspace}
\cite[Theorem 9.3]{boffi2010finite} Let $\mu$ be a non-zero eigenvalue
of $T$, let $E =E(\mu) V$ be its generalized eigenspace, and let
$E_h=E_h(\mu) \mc{R}(U_h)$. Then
\[
\hat{\delta}(E,E_h)\leq C \|(T-T_h)_{|E}\|_{\mc{L}(V(h))}.
\]
\end{lemma}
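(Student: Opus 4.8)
The plan is to follow the classical spectral-approximation argument underlying the Babu\v{s}ka--Osborn theory, working directly with the Riesz projections $E(\mu)$ and $E_h(\mu)$ represented as contour integrals over $\Gamma$. The starting point is the resolvent identity
\[
R_z(T)-R_z(T_h)=R_z(T_h)\,(T-T_h)\,R_z(T),\qquad z\in\Gamma,
\]
which is legitimate because the non-pollution result (Theorem \ref{Non-pollution of the spectrum}) guarantees $\Gamma\subset\rho(T_h)$ for all sufficiently small $h$. Integrating over $\Gamma$ and dividing by $2\pi i$ gives $E(\mu)-E_h(\mu)=\frac{1}{2\pi i}\int_\Gamma R_z(T_h)(T-T_h)R_z(T)\,dz$. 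The reason for arranging the factors in this order is that $R_z(T)$ leaves the exact eigenspace $E$ invariant (for $z\in\Gamma$, since $\sigma(T_{|E})=\{\mu\}$ lies inside $\Gamma$), so the middle factor $T-T_h$ will only ever act on elements of $E$.

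To bound $\delta(E,E_h)$, I would take an arbitrary $u\in E$ with $\|u\|=1$ and use $E_h(\mu)u\in E_h$ as the competitor in the infimum. Since $E(\mu)u=u$, one has $u-E_h(\mu)u=(E(\mu)-E_h(\mu))u$, and applying the integral representation to $u$ yields
\[
u-E_h(\mu)u=\frac{1}{2\pi i}\int_\Gamma R_z(T_h)\,(T-T_h)\big(R_z(T)u\big)\,dz.
\]
Because $R_z(T)u\in E$, the $V(h)$-norm of the integrand is bounded by $\|R_z(T_h)\|_{\mc{L}(V(h))}\,\|(T-T_h)_{|E}\|_{\mc{L}(V(h))}\,\|R_z(T)u\|_{V(h)}$. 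Estimating the contour integral then gives $\delta(E,E_h)\le C\|(T-T_h)_{|E}\|_{\mc{L}(V(h))}$, where $C$ collects the length of $\Gamma$, the finite supremum of $\|R_z(T)_{|E}\|$ over $z\in\Gamma$, and the uniform bound $\sup_{h}\sup_{z\in\Gamma}\|R_z(T_h)\|_{\mc{L}(V(h))}$.

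For the reverse gap $\delta(E_h,E)$, the one-sided quantity $\|(T-T_h)_{|E}\|$ cannot be used directly, since a generic $w\in E_h$ does not lie in $E$. Here I would invoke the equality $\dim E=\dim E_h=k$ furnished by Theorem \ref{Non-pollution of the spectrum}, together with the convergence \eqref{eigenspace_approx}, which forces $\delta(E,E_h)<1$ for $h$ small. A standard finite-dimensional gap lemma for subspaces of equal finite dimension then gives $\delta(E_h,E)\le\delta(E,E_h)/\big(1-\delta(E,E_h)\big)$, hence $\delta(E_h,E)\le C\,\delta(E,E_h)$, and therefore $\hat{\delta}(E,E_h)=\max(\delta(E,E_h),\delta(E_h,E))\le C\|(T-T_h)_{|E}\|_{\mc{L}(V(h))}$, which is the claim.

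The main obstacle is the uniform-in-$h$ boundedness of the discrete resolvent along $\Gamma$, namely controlling $\sup_{h}\sup_{z\in\Gamma}\|R_z(T_h)\|_{\mc{L}(V(h))}$, which enters the constant $C$ above. This is not automatic and must be deduced from the norm convergence \eqref{solution_operator_approx}: writing $z-T_h=(z-T)\big(I-R_z(T)(T_h-T)\big)$ and using $\|T-T_h\|_{\mc{L}(V(h))}\to 0$, a Neumann-series argument transfers the resolvent bound from $T$ to $T_h$, uniformly in $z\in\Gamma$, once $h$ is small enough. The second, less delicate, ingredient is the finite-dimensional gap comparison, which relies essentially on the multiplicity-preservation statement in Theorem \ref{Non-pollution of the spectrum}.
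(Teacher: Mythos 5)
Your argument is correct and is precisely the standard Babu\v{s}ka--Osborn/Boffi proof of this result: the paper itself gives no proof, deferring entirely to \cite[Theorem 9.3]{boffi2010finite}, whose argument proceeds exactly as you describe (resolvent identity under the contour integral so that $T-T_h$ only acts on $E$, a uniform-in-$h$ bound on $R_z(T_h)$ along $\Gamma$ obtained by a Neumann series from the norm convergence \eqref{solution_operator_approx}, and the equal-dimension gap lemma to recover $\delta(E_h,E)$ from $\delta(E,E_h)$). The only caveat worth recording is that the gap is defined here with an unspecified normalization $\|u\|=1$, so if that norm differs from $\|\cdot\|_{V(h)}$ a norm-equivalence constant on the finite-dimensional space $E$ is silently absorbed into $C$.
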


\begin{lemma}\label{corollary_non-pollution}
\cite[Corollary 9.4]{boffi2010finite} Let $\lambda$ be a non-zero
eigenvalue of \eqref{eq:elliptic} and \eqref{eq:harmonic},
respectively, and $E =E(\lambda^{-1}) V$ be its generalized
eigenspace, and let $E_h=E_h(\lambda^{-1}) \mc{R}(U_h)$. Then
\[
\hat{\delta}(E,E_h)\leq C\sup_{u\in E,\|u\|_{V(h)}=1} \inf_{v\in U_h}
\|u-\mc{R}v\|_{V(h)}.
\]
\end{lemma}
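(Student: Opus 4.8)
The plan is to start from Lemma \ref{non-pollution of eigenspace}, which already bounds the gap by the operator difference $\|(T-T_h)_{|E}\|_{\mc{L}(V(h))}$, and then to show that this operator norm is controlled by the best-approximation quantity on the right-hand side. Writing the operator norm explicitly as
\[
\|(T-T_h)_{|E}\|_{\mc{L}(V(h))} = \sup_{u \in E,\, \|u\|_{V(h)}=1} \|(T-T_h)u\|_{V(h)},
\]
it suffices to estimate $\|Tu - T_h u\|_{V(h)}$ for each fixed $u \in E$. Since $u$ is an eigenfunction of the continuous problem with eigenvalue $\lambda$, the definition of $T$ in \eqref{eq:solution_operator} gives $Tu = \lambda^{-1} u$; thus $Tu - T_h u$ is precisely the error of the discrete source problem with right-hand side $u$, and the whole task reduces to a quasi-optimality estimate for that source problem.

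The heart of the argument is a C\'ea-type bound
\[
\|Tu - T_h u\|_{V(h)} \le C \inf_{v \in U_h} \|Tu - \mc{R}v\|_{V(h)},
\]
which I would establish in the standard way for the consistent symmetric interior penalty discretization. First, the method is consistent: by elliptic regularity on the convex domain $\Om$, the continuous solution $Tu$ is smooth enough that the face integrals in \eqref{elliptic_operator} (resp.\ \eqref{biharmonic_operator}) are well defined, the jumps $\jump{Tu}$ and $\jump{\na Tu}$ vanish on interior faces, and integration by parts yields $a_h(Tu, \mc{R}v) = (u, \mc{R}v)$ for all $v \in U_h$. Subtracting the defining relation $a_h(T_h u, \mc{R}v) = (u, \mc{R}v)$ gives the Galerkin orthogonality $a_h(Tu - T_h u, \mc{R}v) = 0$ for every $v \in U_h$. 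Then, for arbitrary $\mc{R}v \in V_h$, I split $Tu - T_h u = (Tu - \mc{R}v) + (\mc{R}v - T_h u)$; the second summand lies in $V_h$, so combining the coercivity and boundedness of $a_h$ from Lemma \ref{lemma:bilinear_operator_property} with the orthogonality controls $\|\mc{R}v - T_h u\|_{V(h)}$ by $\|Tu - \mc{R}v\|_{V(h)}$, and a triangle inequality followed by taking the infimum over $v \in U_h$ delivers the stated quasi-optimality.

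It remains to convert the best approximation of $Tu$ into that of $u$. Using $Tu = \lambda^{-1} u$ and the linearity of $\mc{R}$, the substitution $v \mapsto \lambda^{-1} v$ gives $\inf_{v \in U_h} \|Tu - \mc{R}v\|_{V(h)} = \lambda^{-1}\inf_{v \in U_h}\|u - \mc{R}v\|_{V(h)}$, and since $\lambda$ is a fixed nonzero eigenvalue the factor $\lambda^{-1}$ is absorbed into $C$. Taking the supremum over $u \in E$ with $\|u\|_{V(h)}=1$ and chaining with Lemma \ref{non-pollution of eigenspace} produces the claimed bound. I expect the main obstacle to be the quasi-optimality step in this nonconforming, sum-space setting: the coercivity in Lemma \ref{lemma:bilinear_operator_property} is available only on $V_h = \mc{R}(U_h)$, whereas the error $Tu - T_h u$ has its continuous part $Tu$ outside $V_h$. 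The splitting trick above circumvents this, but it hinges on the boundedness of $a_h$ extending to the full space $V(h)$, which relies on the extra regularity of $Tu$ to control the interface terms $\aver{\na Tu}$ and $\aver{\Delta Tu}$. Establishing (or citing) this sum-space boundedness together with the consistency identity are the two technical points that carry the proof.
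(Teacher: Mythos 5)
The paper does not prove this lemma itself but cites it as Corollary 9.4 of \cite{boffi2010finite}; your argument is precisely the standard derivation of that corollary from Lemma~\ref{non-pollution of eigenspace} via the identity $Tu=\lambda^{-1}u$ for $u\in E$ together with quasi-optimality of the interior penalty approximation of the source problem, so it is correct and matches the cited source's approach. You have also rightly identified the two technical ingredients that must be imported (consistency of $a_h$ and its boundedness on the sum space $V(h)$), which are exactly what \cite{arnold2002unified} and \cite{li2016discontinuous,li2017discontinuous} supply.
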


We now claim the approximation estimate for the solution operator, and
we refer to~\cite{li2016discontinuous,li2017discontinuous} for more
details.

\begin{lemma}\label{lemma:appro_eigenvector}
Let $\lambda$ be a non-zero eigenvalue of
    \eqref{eq:elliptic} and \eqref{eq:harmonic}, respectively, let $E$
    be the eigenspace associated with $\lambda$, and its regularity
    satisfy
$E\subset H^{m+1}(\Omega)$, $m \geq 2p-1$, then
\[
 \|(T-T_h)_{|E}\|_{\mc{L}(V(h))}\leq C \Lr{h^{\tau}+\Lam_md^{\tau}},
\]
where $\tau=m+1-p$.
\end{lemma}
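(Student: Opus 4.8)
The plan is to read $\|(T-T_h)_{|E}\|_{\mc{L}(V(h))}$ as an energy-norm error estimate for the source problem discretized by the symmetric interior penalty scheme, and then to chain the quasi-optimality of that scheme with the interpolation bound of Lemma~\ref{lemma:approximate_energy_error}. Unwinding \eqref{eq:solution_operator}, for $f\in E$ the exact solution operator acts as $Tf=\lambda^{-1}f$, so that $w:=Tf\in E\subset H^{m+1}(\Omega)$ is as smooth as the eigenfunctions, while $w_h:=T_hf\in\mc{R}(U_h)$ is determined by $a_h(w_h,\mc{R}v)=(f,\mc{R}v)$ for all $v\in U_h$. It therefore suffices to bound $\|w-w_h\|_{V(h)}$ uniformly for $f\in E$ with $\|f\|_{V(h)}=1$.

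First I would establish Galerkin orthogonality. Because $m\ge 2p-1$ forces $w\in H^{2p}(\Omega)$, the traces across interior faces of $w$ (and of $\na w$ when $p=2$) are single valued, so $\jump{w}=0$ (and $\jump{\na w}=0$ for $p=2$), while the homogeneous boundary conditions in \eqref{eq:elliptic} and \eqref{eq:harmonic} kill the remaining boundary contributions; an elementwise integration by parts then shows that the interior penalty form is consistent, $a_h(w,\mc{R}v)=(f,\mc{R}v)$ for every $v\in U_h$, cf.~\cite{arnold2002unified,li2017discontinuous}. Subtracting the defining relation for $w_h$ yields $a_h(w-w_h,\mc{R}v)=0$ for all $v\in U_h$.

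Next I would run a C\'ea-type argument. For any $v\in U_h$, the coercivity and boundedness of Lemma~\ref{lemma:bilinear_operator_property} (extended to the sum space $V(h)$ on which $w-\mc{R}v$ lives) together with the orthogonality give
\begin{equation*}
\begin{aligned}
C_b\|w_h-\mc{R}v\|_{V(h)}^2 &\le a_h(w_h-\mc{R}v,\,w_h-\mc{R}v)= a_h(w-\mc{R}v,\,w_h-\mc{R}v)\\
&\le C_s\|w-\mc{R}v\|_{V(h)}\,\|w_h-\mc{R}v\|_{V(h)},
\end{aligned}
\end{equation*}
and after a triangle inequality this upgrades to the quasi-optimal bound $\|w-w_h\|_{V(h)}\le C\inf_{v\in U_h}\|w-\mc{R}v\|_{V(h)}$. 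Choosing $v$ so that $\mc{R}v=\mc{R}w$ is the reconstruction interpolant of $w$ and invoking Lemma~\ref{lemma:approximate_energy_error} with $u=w\in H^{m+1}(\Omega)$ bounds the infimum by $C\Lr{h^{\tau}+\Lam_m d^{\tau}}\snm{w}{H^{m+1}(\Omega)}$, where the interpolation exponent equals $m$ for $p=1$ and $m-1$ for $p=2$, i.e.\ $\tau=m+1-p$ in both cases.

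Finally, to pass from this pointwise bound to the operator norm I would use that $T$ is compact, so its eigenspace $E$ is finite dimensional and every seminorm on $E$ is dominated by $\|\cdot\|_{V(h)}$; hence $\snm{w}{H^{m+1}(\Omega)}=\lambda^{-1}\snm{f}{H^{m+1}(\Omega)}\le C\|f\|_{V(h)}$, and taking the supremum over $f\in E$ with $\|f\|_{V(h)}=1$ gives the claimed estimate. The main obstacle is the consistency/quasi-optimality step: one must verify that the regularity $w\in H^{2p}(\Omega)$ provided by $m\ge 2p-1$ is exactly what makes the face integrals in \eqref{elliptic_operator}--\eqref{biharmonic_operator} meaningful and forces the jump contributions of the exact solution to vanish, and that the boundedness and coercivity of Lemma~\ref{lemma:bilinear_operator_property} indeed extend from $\mc{R}(U_h)$ to the full sum space $V(h)$.
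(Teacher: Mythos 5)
Your overall strategy coincides with the paper's: both reduce the operator-norm bound to the energy-norm a priori error estimate for the associated source problem with data $f\in E$, and then use $Tf=\lambda^{-1}f$ together with the regularity $E\subset H^{m+1}(\Omega)$ to take the supremum over the eigenspace. The difference is that the paper obtains the source-problem estimate $\|u_s-\mc{R}u_h\|_{V(h)}\le C(h^{\tau}+\Lambda_m d^{\tau})|u_s|_{H^{m+1}(\Omega)}$ purely by citation of \cite[Theorem 3.1]{li2016discontinuous} and \cite[Theorem 3.1]{li2017discontinuous}, whereas you re-derive it via consistency, a C\'ea-type argument, and Lemma~\ref{lemma:approximate_energy_error}; you also make explicit the final step (finite dimensionality of $E$ and equivalence of norms there, with $\|\cdot\|_{V(h)}$ restricted to $E\subset V$ being $h$-independent since the jump terms vanish) that the paper compresses into ``this estimate directly implies''. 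The one place your expanded argument is not yet closed is the one you flag yourself: the continuity constant $C_s$ of Lemma~\ref{lemma:bilinear_operator_property} is stated only for discrete--discrete pairs, and the face term $\int_e\aver{\na(w-\mc{R}v)}\cdot\jump{w_h-\mc{R}v}\,\ds$ cannot be bounded by $\|w-\mc{R}v\|_{V(h)}\,\|w_h-\mc{R}v\|_{V(h)}$ alone when $w-\mc{R}v$ is merely elementwise $H^{2p}$; one must either work in an augmented norm containing terms such as $h_e\|\aver{\na\cdot}\|_{L^2(e)}^2$ and bound the interpolation error in that stronger norm (which Lemma~\ref{theorem:normapp} together with the Agmon inequality permits), or carry the extra $H^{m+1}$-seminorm contribution explicitly through the C\'ea step. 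This is precisely how the cited source-problem theorems proceed, so the gap is routine to fill, and otherwise your argument is sound and matches the paper's route.
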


\begin{proof}
The source problem corresponding to~\eqref{eq:elliptic} takes the form
\[
- \Delta u_s = f  \quad \text{in }\Omega,\quad 
u_s =0 \quad \text{on }\partial \Omega,
\]
and the source problem corresponding to~\eqref{eq:harmonic} takes the
form
\[
\Delta^2 u_s = f \quad \text{in } \Omega,\quad u_s=\dfrac{\partial
u_s}{\partial \un}=0 \quad  \text{on } \partial \Omega.
\]

The discrete variational problem for the
source problem reads: find $u_h\in U_h$ such that 
\begin{equation}
  a_h(\mc{R} u_h, \mc{R} v_h) = (f, \mc{R} v_h), \quad \forall v_h \in
  U_h.
  \label{eq:dsource}
\end{equation}
From~\cite[Theorem 3.1]{li2016discontinuous} \cite[Theorem
3.1]{li2017discontinuous}, we conclude that there exists a unique
solution to \eqref{eq:dsource}. Furthermore, if $u_s \in
H^{m+1}(\Omega)$, we have the following estimate:
\[
\|u_s-\mc{R}u_h\|_{V(h)} \leq C (h^{\tau}+\Lam_m
d^{\tau})|u_s|_{H^{m+1}(\Omega)},
\]
where $\tau=m+1-p$. This estimate directly implies 
\[
 \|(T-T_h)_{|E}\|_{\mc{L}(V(h))}\leq C \Lr{h^{\tau}+\Lam_md^{\tau}},
\]
which completes the proof.
\end{proof}

Then, the error estimates for the eigenfunctions can be directly derived.

\begin{theorem}\label{approx_eigenvector}
Let $u^{(i)}$ be a unit
eigenfunction associated with an eigenvalue $\lambda^{(i)}$ of
multiplicity $k$, such that $\lambda^{(i)}=\cdots=\lambda^{(i+k-1)}$,
and $\mc{R}u^{(i)}_h,\cdots, \mc{R}u^{(i+k-1)}_h$ denote the discrete
eigenfunctions associated with the $k$ discrete eigenvalues converging
to $\lambda^{(i)}$. Then there exists
\begin{equation}\label{eq:discrete_eigenspace}
\mc{R}w^{(i)}_h\in \text{span} \{\mc{R}u^{(i)}_h,\cdots,
\mc{R}u^{(i+k-1)}_h\},
\end{equation}
such that
\begin{equation}\label{eq:best_appr_eigenspace}
\|u^{(i)}-\mc{R}w^{(i)}_h\|_{V(h)}\leq C \sup_{u\in E,\|u\|_{V(h)}=1}
\inf_{v\in U_h} \|u-\mc{R}v\|_{V(h)}.
\end{equation}
Moreover, if the regularity of eigenspace is $E\subset
H^{m+1}(\Omega)$, $m\geq 2p-1$, then
\begin{equation}\label{eq:appr_eigenspace}
\|u^{(i)}-\mc{R}w^{(i)}_h\|_{V(h)}\leq C \Lr{h^{\tau}+\Lam_md^{\tau}}
|u^{(i)}|_{H^{m+1}(\Omega)},
\end{equation}
where $\tau=m+1-p$.
\end{theorem}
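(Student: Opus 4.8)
The plan is to deduce the eigenfunction error directly from the gap between the continuous and discrete generalized eigenspaces, for which all the machinery has already been assembled. First I would observe that the span appearing in \eqref{eq:discrete_eigenspace} is precisely the discrete eigenspace $E_h=E_h(\lambda^{-1})\mc{R}(U_h)$: by Theorem \ref{Non-pollution of the spectrum} exactly $k$ discrete eigenvalues converge to $\lambda^{(i)}$, and the associated $\mc{R}u^{(i)}_h,\dots,\mc{R}u^{(i+k-1)}_h$ form a basis of $E_h$, which has dimension $k$. Hence producing the required $\mc{R}w^{(i)}_h$ amounts to selecting an appropriate element of $E_h$.

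Next I would use the one-sided gap $\delta(E,E_h)$ to extract this element for the \emph{specific} eigenfunction $u^{(i)}$. Since $u^{(i)}\in E$, normalizing it to unit $V(h)$-norm and invoking the definition of $\delta(E,E_h)$ yields an element $\mc{R}w^{(i)}_h\in E_h$ with
\[
\|u^{(i)}-\mc{R}w^{(i)}_h\|_{V(h)}\le\|u^{(i)}\|_{V(h)}\,\delta(E,E_h)\le C\,\hat{\delta}(E,E_h),
\]
where the factor $\|u^{(i)}\|_{V(h)}$ is controlled by a constant depending only on the fixed eigenvalue $\lambda^{(i)}$, via the equivalence of norms on the finite-dimensional space $E$. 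This reduces both claimed bounds to estimating the symmetric gap $\hat{\delta}(E,E_h)$, which is exactly the object controlled by the Babu\v{s}ka--Osborn lemmas.

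The two estimates then follow by feeding $\hat{\delta}(E,E_h)$ into those abstract bounds. For \eqref{eq:best_appr_eigenspace} I would apply Lemma \ref{corollary_non-pollution} directly, which bounds $\hat{\delta}(E,E_h)$ by $C\sup_{u\in E,\|u\|_{V(h)}=1}\inf_{v\in U_h}\|u-\mc{R}v\|_{V(h)}$. For \eqref{eq:appr_eigenspace}, under the regularity hypothesis $E\subset H^{m+1}(\Omega)$ with $m\ge 2p-1$, I would chain Lemma \ref{non-pollution of eigenspace} with Lemma \ref{lemma:appro_eigenvector} to obtain $\hat{\delta}(E,E_h)\le C\|(T-T_h)_{|E}\|_{\mc{L}(V(h))}\le C(h^{\tau}+\Lam_m d^{\tau})$ with $\tau=m+1-p$. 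The explicit seminorm $|u^{(i)}|_{H^{m+1}(\Omega)}$ is then recovered by bounding the sup--inf in \eqref{eq:best_appr_eigenspace} through the interpolation estimate of Lemma \ref{lemma:approximate_energy_error}, applied to a $V(h)$-unit basis of $E$; the finite-dimensionality of $E$ makes the resulting maximal seminorm comparable to $|u^{(i)}|_{H^{m+1}(\Omega)}$.

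The main obstacle I anticipate is not any single hard estimate but the careful bookkeeping of norms and normalizations. The eigenfunction $u^{(i)}$ is normalized in $L^2$, the gap and all approximation bounds are measured in $V(h)$, and $\hat{\delta}(E,E_h)$ lives in the operator framework of \eqref{eq:solution_operator}. One must verify that every constant converting between these normalizations depends only on $\lambda^{(i)}$ and on the shape-regularity data $N_s,\sigma$ (through the fixed finite-dimensional space $E$), and never on $h$, so that the rate $h^{\tau}+\Lam_m d^{\tau}$ is genuinely preserved. Once this is checked, the remaining steps are routine invocations of the cited lemmas.
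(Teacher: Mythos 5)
Your proposal is correct and follows essentially the same route as the paper, which simply cites Lemma \ref{corollary_non-pollution} for \eqref{eq:discrete_eigenspace}--\eqref{eq:best_appr_eigenspace} and Lemma \ref{lemma:appro_eigenvector} for \eqref{eq:appr_eigenspace}; you have merely filled in the details (identifying the span with the discrete Riesz eigenspace, the gap argument, and the normalization bookkeeping) that the paper leaves implicit. No discrepancy in approach or substance.
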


\begin{proof}
The results~\eqref{eq:discrete_eigenspace}
and~\eqref{eq:best_appr_eigenspace} are direct extensions of
Lemma~\ref{corollary_non-pollution} and the estimate
~\eqref{eq:appr_eigenspace} is directly derived from
Lemma~\ref{lemma:appro_eigenvector}.
\end{proof}

Finally, the error estimates for the eigenvalues of
\eqref{eq:elliptic} and \eqref{eq:harmonic} are the following.
\begin{theorem}\label{approx_eigenvalue} 
Let $\lambda^{(i)}$ denote the
eigenvalue of \eqref{eq:elliptic} and \eqref{eq:harmonic} with
multiplicity $k$, $\lambda^{(i)}_h$ be the discrete eigenvalues and $E$
denote the eigenspace associated with $\lambda^{(i)}$, then we have
\begin{equation}\label{eq:best_appro_eigenvalue}
|\lambda^{(i)}-\lambda^{(i)}_h|\leq C \sup_{u\in E,\|u\|_{V(h)}=1}
\inf_{v\in U_h} \|u-\mc{R}v\|_{V(h)}^2.
\end{equation}
Moreover, if eigenspace $E\subset H^{m+1}(\Omega)$, $m\geq 2p-1$, then
the following optimal double order of convergence holds
\begin{equation}\label{eq:eigenvalue_estimate}
|\lambda^{(i)}-\lambda^{(i)}_h|\leq C \Lr{h^{2\tau}+\Lam_md^{2\tau}},
\end{equation}
where $\tau = m + 1 -p$.
\end{theorem}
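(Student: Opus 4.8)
\emph{Proof proposal.} The plan is to place the estimate inside the Babu\v{s}ka--Osborn spectral framework already assembled through the solution operators $T$ and $T_h$, and to extract the doubling of the rate from the symmetry of $a_h$. It is convenient to pass to the reciprocal eigenvalues $\mu:=1/\lambda^{(i)}\in\sigma(T)$ and $\mu_h:=1/\lambda^{(i)}_h\in\sigma(T_h)$: since $\lambda^{(i)}$ is fixed and $\lambda^{(i)}_h\to\lambda^{(i)}$ by Theorem~\ref{Non-pollution of the spectrum}, both stay uniformly bounded below, so $|\lambda^{(i)}-\lambda^{(i)}_h|=\lambda^{(i)}\lambda^{(i)}_h\,|\mu-\mu_h|$ is equivalent to $|\mu-\mu_h|$ up to a constant. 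Thus it suffices to bound $|\mu-\mu_h|$: the first estimate \eqref{eq:best_appro_eigenvalue} will follow from an Osborn-type eigenvalue bound, while the rate \eqref{eq:eigenvalue_estimate} will be a one-line substitution.

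The heart of the matter is the quadratic (double-order) dependence of the eigenvalue error on the eigenfunction error, which I would obtain from a Rayleigh-quotient identity. Let $u=u^{(i)}\in E$ be a unit eigenfunction and let $\mc{R}w_h:=\mc{R}w^{(i)}_h\in V_h$ be the discrete quasi-eigenfunction supplied by Theorem~\ref{approx_eigenvector}. Because the regularity hypothesis $E\subset H^{m+1}(\Omega)$ with $m\ge 2p-1$ makes all face integrals well defined and forces the jumps of $u$ and of its normal derivatives to vanish, the consistency of the symmetric interior penalty form (see \cite{arnold2002unified,li2016discontinuous,li2017discontinuous}) gives $a_h(u,\mc{R}v)=\lambda^{(i)}(u,\mc{R}v)$ for every $v\in U_h$, while the conformity of $u$ yields $a_h(u,u)=a(u,u)=\lambda^{(i)}$. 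Combining these with the symmetry of $a_h$ and expanding both quadratic forms yields
\begin{equation*}
a_h(\mc{R}w_h,\mc{R}w_h)-\lambda^{(i)}(\mc{R}w_h,\mc{R}w_h)
=a_h(u-\mc{R}w_h,\,u-\mc{R}w_h)-\lambda^{(i)}(u-\mc{R}w_h,\,u-\mc{R}w_h),
\end{equation*}
whose right-hand side is manifestly quadratic in the error $u-\mc{R}w_h$. Bounding it by the continuity constant $C_s$ of Lemma~\ref{lemma:bilinear_operator_property} together with $\lambda^{(i)}\|u-\mc{R}w_h\|_{L^2(\Omega)}^2\le\lambda^{(i)}\|u-\mc{R}w_h\|_{V(h)}^2$, dividing by $(\mc{R}w_h,\mc{R}w_h)$ (which tends to $1$ since $\mc{R}w_h\to u$ in $L^2$, so the denominator is uniformly positive), and identifying the Rayleigh quotient with $\lambda^{(i)}_h$ through the min--max principle and the non-pollution results of Theorem~\ref{Non-pollution of the spectrum} and \eqref{eigenspace_approx}, I arrive at $|\lambda^{(i)}-\lambda^{(i)}_h|\le C\|u-\mc{R}w_h\|_{V(h)}^2$. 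Inserting the best-approximation bound \eqref{eq:best_appr_eigenspace} of Theorem~\ref{approx_eigenvector} and using $(\sup_u\inf_v\|\cdot\|)^2=\sup_u\inf_v\|\cdot\|^2$ then produces \eqref{eq:best_appro_eigenvalue}.

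The rate \eqref{eq:eigenvalue_estimate} is then immediate: substituting the interpolation estimate of Lemma~\ref{lemma:approximate_energy_error}, namely $\inf_{v\in U_h}\|u-\mc{R}v\|_{V(h)}\le C(h^{\tau}+\Lambda_m d^{\tau})$ with $\tau=m+1-p$, into \eqref{eq:best_appro_eigenvalue} and squaring yields a bound of the claimed form $h^{2\tau}+\Lambda_m d^{2\tau}$, after handling the cross term by $2ab\le a^2+b^2$ and absorbing the surplus $h$-independent powers of $\Lambda_m$ into the generic constant $C$.

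I expect the main obstacle to be the rigorous passage in the second paragraph from the Rayleigh-quotient identity to a \emph{two-sided} control of the individual eigenvalue $\lambda^{(i)}_h$ when the multiplicity $k$ exceeds one: the quotient $a_h(\mc{R}w_h,\mc{R}w_h)/(\mc{R}w_h,\mc{R}w_h)$ is only a convex combination of the clustered discrete eigenvalues, so pinning it to $\lambda^{(i)}_h$ requires the full non-pollution machinery (Theorem~\ref{Non-pollution of the spectrum} and the eigenspace convergence \eqref{eigenspace_approx}) rather than the algebraic identity alone. A secondary, more technical point is verifying the consistency relation $a_h(u,\mc{R}v)=\lambda^{(i)}(u,\mc{R}v)$ in the discontinuous setting, which is precisely where the regularity assumption $m\ge 2p-1$ is consumed; once these are in hand, everything else is bookkeeping within the Babu\v{s}ka--Osborn theory.
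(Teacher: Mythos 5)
Your proposal is correct and, at bottom, coincides with the paper's argument: the paper simply observes that $a_h$ is symmetric and cites \cite[Theorem 9.13]{boffi2010finite} for the squared bound \eqref{eq:best_appro_eigenvalue}, then combines it with Lemma~\ref{lemma:appro_eigenvector} to obtain \eqref{eq:eigenvalue_estimate}. What you have done is unfold that citation into the classical Rayleigh-quotient derivation: the identity $a_h(\mc{R}w_h,\mc{R}w_h)-\lambda^{(i)}(\mc{R}w_h,\mc{R}w_h)=a_h(e,e)-\lambda^{(i)}(e,e)$ with $e=u-\mc{R}w_h$ is exactly right (the linear terms cancel by conformity of $u$ and consistency of the SIP form), and it makes explicit where the symmetry of $a_h$ is consumed. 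The one place your route is genuinely weaker than the citation is the point you yourself flag: for a cluster of multiplicity $k>1$ the Rayleigh quotient of $\mc{R}w_h$ only controls a convex combination of the $k$ discrete eigenvalues, and promoting this to a bound on each individual $\lambda^{(i+j)}_h$ is precisely what Boffi's operator-theoretic proof (via the spectral projections $E$, $E_h$) delivers and what the identity alone does not; so your argument ultimately still leans on the cited theorem rather than replacing it. Two minor further remarks: the first estimate \eqref{eq:best_appro_eigenvalue} is stated without the regularity hypothesis, so the consistency relation $a_h(u,\mc{R}v)=\lambda^{(i)}(u,\mc{R}v)$ must be justified from elliptic regularity on the convex polygonal domain rather than from $m\ge 2p-1$; and absorbing $\Lambda_m^2$ into $C\Lambda_m$ after squaring tacitly uses the uniform boundedness of $\Lambda_m$ --- a harmless imprecision that the paper's own statement \eqref{eq:eigenvalue_estimate} shares.
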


\begin{proof}
Since the operator $a_h(\cdot, \cdot)$ is symmetric, i.e. $a_h(T_h f,
\mc{R}v) = a_h(\mc{R}v, T_h f)$, the estimate
\eqref{eq:best_appro_eigenvalue} is a direct application of
~\cite[Theorem 9.13]{boffi2010finite}. The
estimate~\eqref{eq:eigenvalue_estimate} is the combination
of the inequality~\eqref{eq:best_appro_eigenvalue} and
Lemma~\ref{lemma:appro_eigenvector}.
\end{proof}

%%% Local Variables:
%%% mode: latex
%%% TeX-master: "eigenvalue"
%%% End:

% vim:spell:tw=70:fo+=Mn:cc=70
\section{NumericalResults}\label{sec:examples}
In this section, we present some
numerical results to show that our method is efficient for eigenvalue
problems if we use higher order approximation. We would like to
emphasize two points: \begin{itemize} \item Less DOFs are used by our
method for high order approximation   comparing to the classical DG
method and conforming finite element   methods; \item More reliable
eigenvalues can be obtained increasing the order   of approximation.
\end{itemize} Besides, we will compute the numerical order of
convergence to verify the theoretical error estimates and give results
on different domains and different meshes to demonstrate the
flexibility of the implementation using our method.

\subsection{Examples setup} First, let us list the setup of the
examples to be investigated.

\paragraph{\bf Example 1} We consider the two-dimensional square
domain $\Omega=[0,\pi]^2$, the eigenpairs of problem
\eqref{eq:elliptic} are given by \begin{displaymath}   \begin{aligned}
\lambda_{i,j}=& i^2+j^2, \ \text{for} \ i,j>0 \ \text{and} \ i,j
\in \mb{N},\\ u_{i,j}=& \sin(ix)\sin(jy),   \end{aligned}
\end{displaymath} and for the problem \eqref{eq:harmonic} with the
boundary condition $u|_{\partial \Omega}=\Delta u|_{\partial
\Omega}=0$, which is related to the bending of a simply supported
plate \cite{brenner2015C0}, the eigenpairs are given by
\begin{displaymath}   \begin{aligned}     \lambda_{i,j}=& (i^2+j^2)^2,
\ \text{for} \ i,j>0 \ \text{and}     \ i,j \in \mb{N},\\ u_{i,j}=&
\sin(ix)\sin(jy).   \end{aligned} \end{displaymath} In this example,
the computation involves a series of regular unstructured triangular
meshes which are generated by {\it   Gmsh}~\cite{geuzaine2009gmsh}.
For the second order elliptic problem we take $(m=1,2,3,4,5)$ and for
the biharmonic problem $m$ is taken as $( 2,3,4,5)$.

\paragraph{\bf Example 2} We consider the L-shaped domain $[-1,1]^2
\backslash (0,1]\times(0,-1]$. The domain is partitioned     into
polygonal meshes by {\it PolyMesher}
\cite{talischi2012polymesher}. Figure \ref{polygonal_mesh} shows
the initial mesh and the refined mesh.  The meshes contain the
elements with various geometries such as quadrilaterals,
pentagons, hexagons, and so on. The first eigenfunction in
L-sharped domain has a singularity at the reentrant corner and has
no analytical expression. We note that the third eigenpair is
smooth for L-shaped domain. For the second order elliptic
equation, the third eigenvalue is $2\pi^2$ and the corresponding
eigenfunction is $\sin(\pi x) \sin(\pi y)$, and we take
$(m=1,2,3)$ to solve the eigenvalue problem.  For the biharmonic
equation, the third eigenpair is $4\pi^4$ and $\sin(\pi x)
\sin(\pi y)$, and we choose $(m=2,3)$ to solve it.

\begin{figure}   \begin{center}
\includegraphics[width=0.48\textwidth]{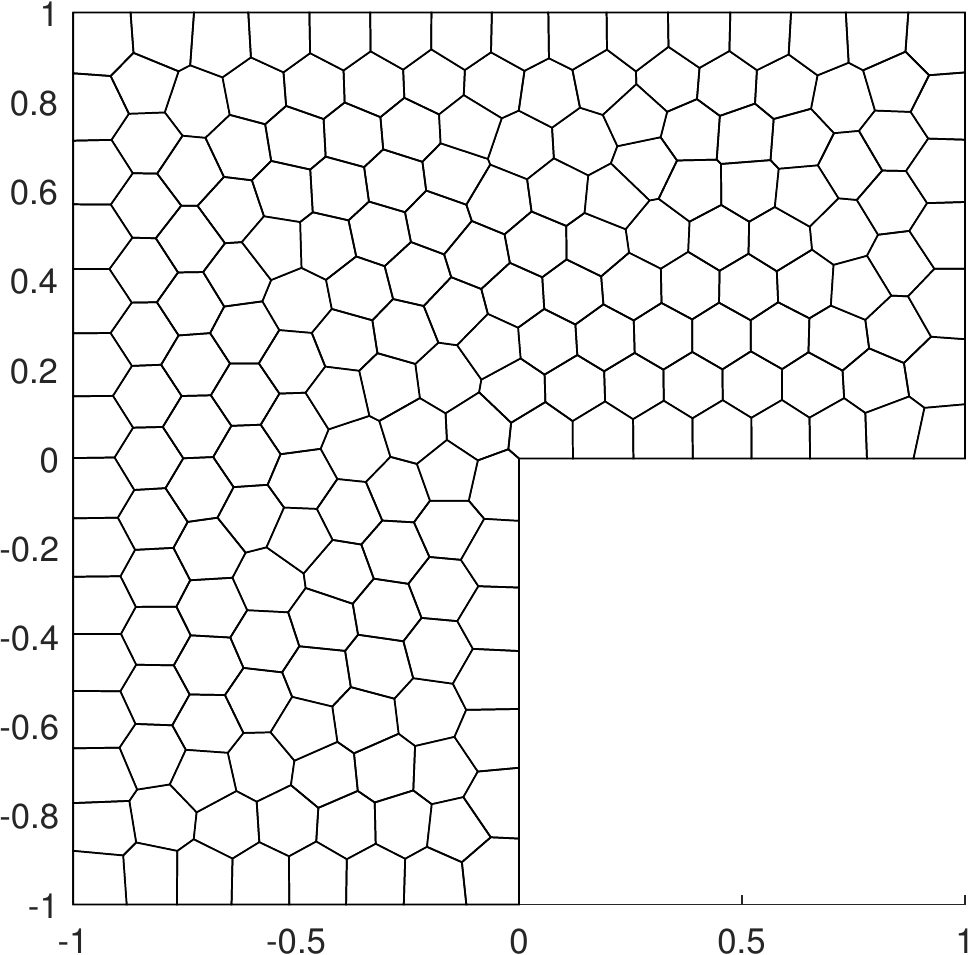}
\includegraphics[width=0.48\textwidth]{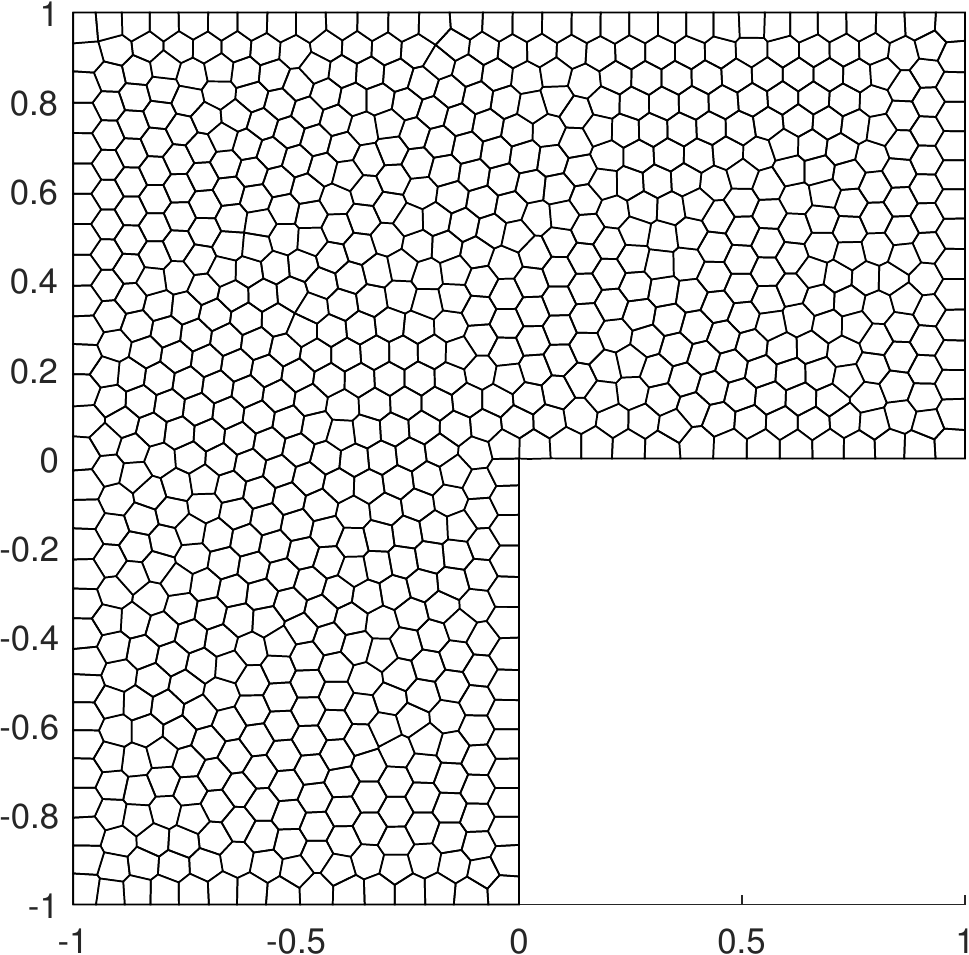}
\caption{The polygonal mesh (left) / refined polygonal mesh
(right) for Example 2.}     \label{polygonal_mesh}   \end{center}
\end{figure}

\paragraph{\bf Example 3} We solve the eigenvalue problem in three
dimensions in this example. The computational domain is the unit cubic
$\Omega=[0,1]^3$ which is partitioned into tetrahedral meshes by {\it
Gmsh}. The eigenpairs of problem \eqref{eq:elliptic} are as follows:
\begin{displaymath}   \begin{aligned}     \lambda_{i,j,k}=&
(i^2+j^2+k^2)\pi^2, \ \text{for} \ i,j,k>0     \ \text{and} \ i,j,k
\in \mb{N},\\ u_{i,j,k}=& \sin(i\pi     x)\sin(j\pi y)\sin(k\pi z),
\end{aligned} \end{displaymath} and for problem \eqref{eq:harmonic}
with the simply supported plate boundary condition, the eigenpairs are
given by \begin{displaymath}   \begin{aligned}     \lambda_{i,j,k}=&
(i^2+j^2+k^2)^2\pi^4, \ \text{for} \ i,j,k>0     \ \text{and} \ i,j,k
\in \mb{N},\\ u_{i,j,k}=& \sin(i\pi     x)\sin(j\pi y)\sin(k\pi z).
\end{aligned} \end{displaymath}

\subsection{Convergence order study} At first, we show that the
numerical results verify the optimal convergence order as predicted by
the theory.

For Example 1, Figure \ref{tri_elliptic_error} shows the convergence
rates of the eigenvalue and eigenfunction. The exact $20$-th
eigenvalue is $32$ and the corresponding eigenfunction is
$\sin(4x)\sin(4y)$. The eigenvalue converges to the exact one with
$h^{2m}$ rate and for the eigenfunction the convergence rate is $h^m$.
Figure \ref{tri_biharmonic_error} shows the convergence rates of the
eigenvalue and eigenfunction of the biharmonic equation. The exact
$20$-th eigenvalue is $1024$ and the corresponding eigenfunction is
$\sin(4x)\sin(4y)$. The eigenvalue convergences to the exact one with
$h^{2(m-1)}$ rate, and for the eigenfunction the convergence rate is
$h^{m-1}$. The numerical results agree with Theorem
\ref{approx_eigenvector} and \ref{approx_eigenvalue} perfectly.

\begin{figure}   \begin{center}
\includegraphics[width=0.48\textwidth]{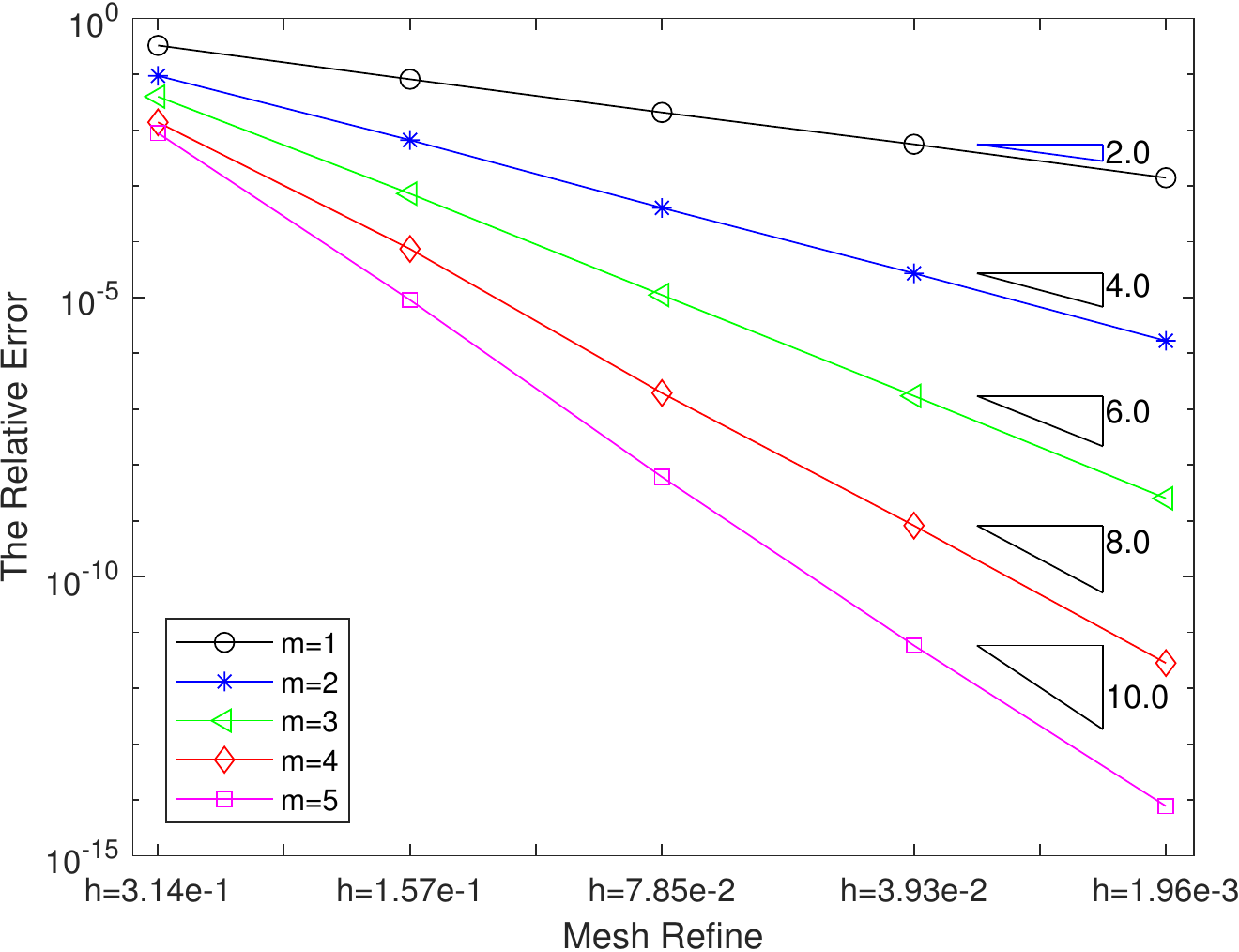}
\includegraphics[width=0.48\textwidth]{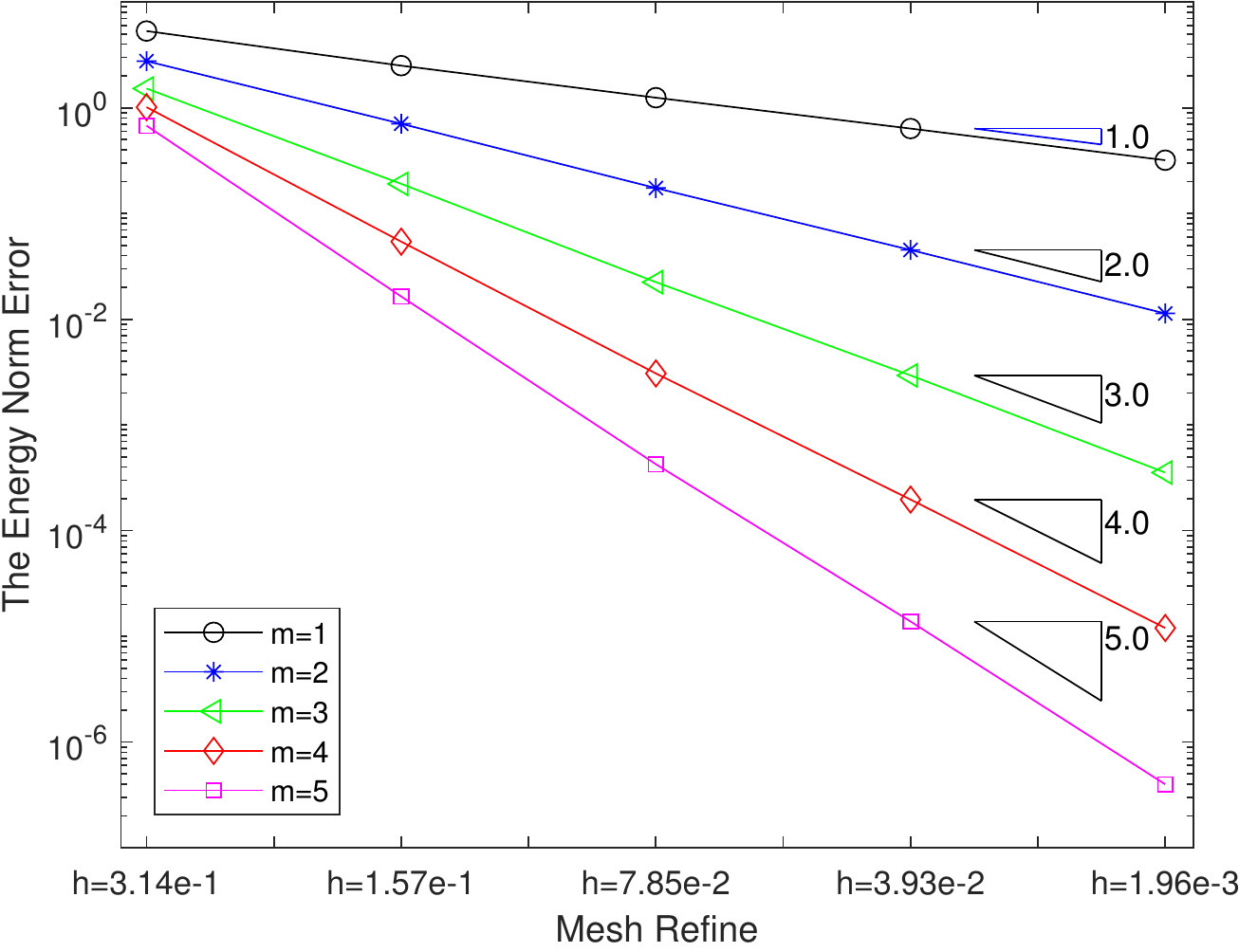}
\caption{The convergence rates of the $20$-th eigenvalue (left) /       eigenfunction (right)
of the second order problem for different       orders $m$ on triangle
meshes for Example 1.}
\label{tri_elliptic_error}   
\end{center}
\end{figure}

\begin{figure}   \begin{center}
\includegraphics[width=0.48\textwidth]{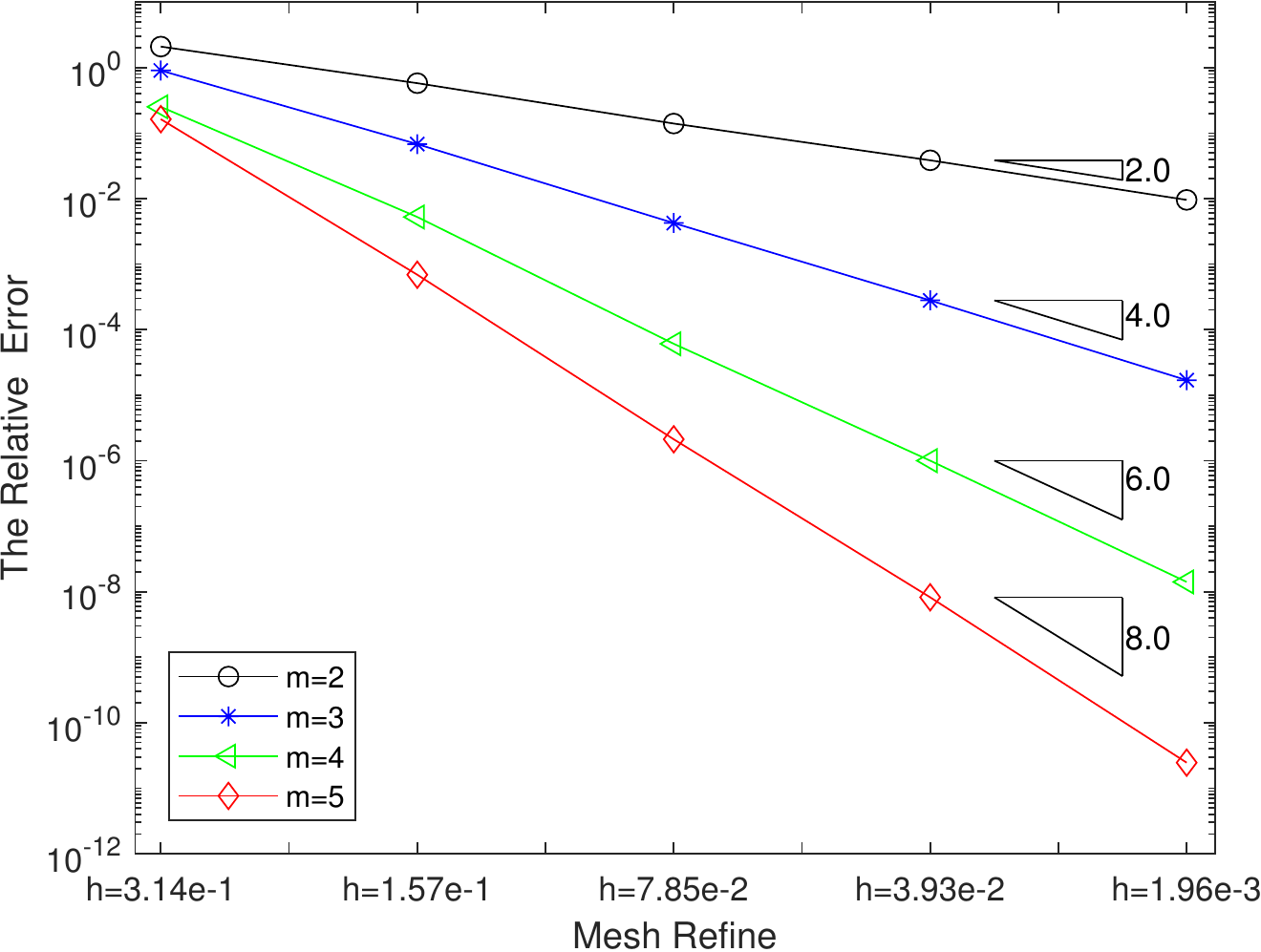}
\includegraphics[width=0.48\textwidth]{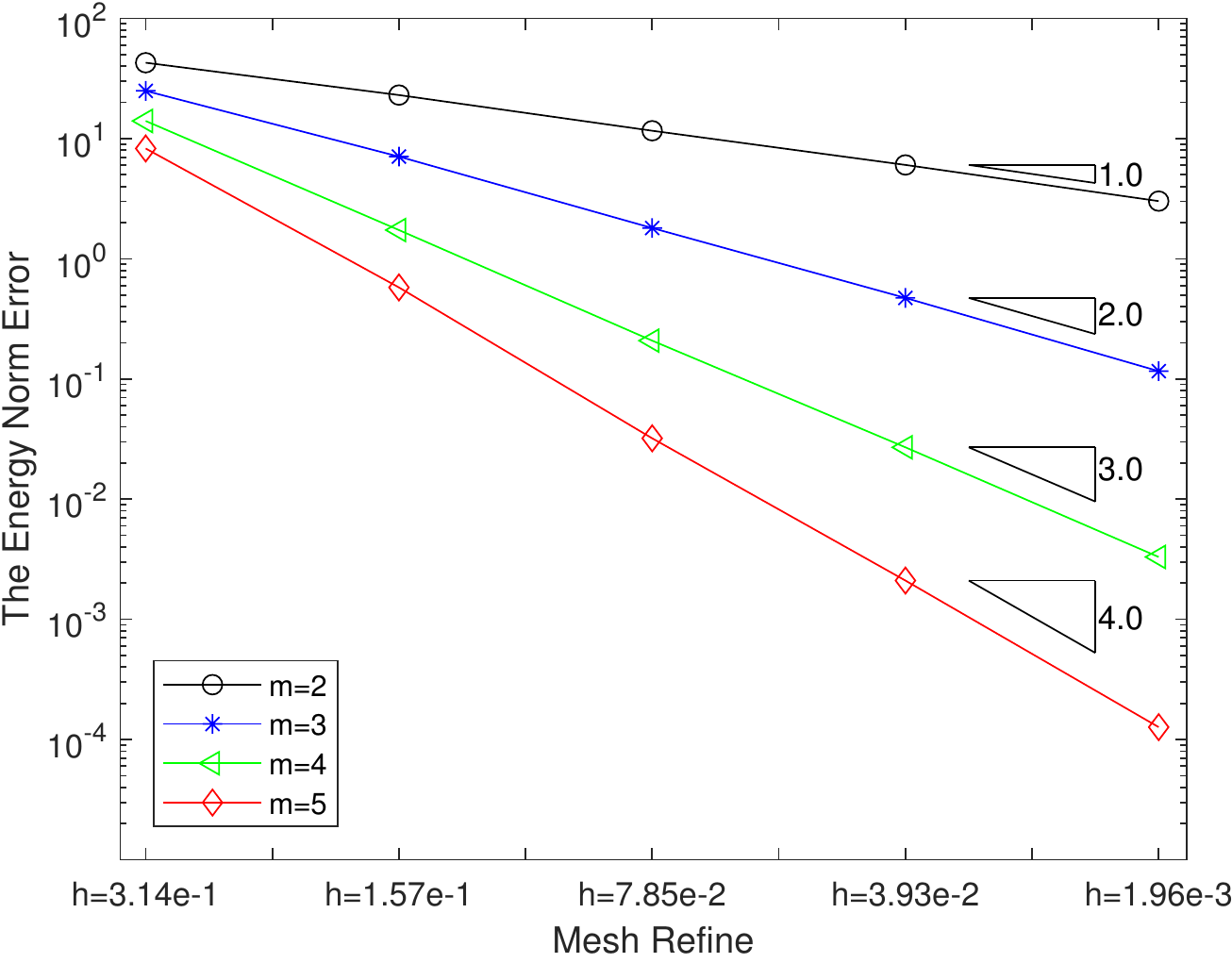}
\caption{The convergence rates of the $20$-th eigenvalue (left) /       eigenfunction (right)
of the biharmonic problem for different       orders $m$ on triangle
meshes for Example 1.}     
\label{tri_biharmonic_error}   
\end{center}
\end{figure}

The Example 2 shows that the proposed method can handle these
polygonal elements easily. First, we calculate the third smooth
eigenpair to verify the analysis of the proposed method. Figures
\ref{polygonal_elliptic_error} and \ref{polygonal_biharmonic_error}
show the numerical results that agree with the theoretical prediction.
The values of the first eigenvalue of the Laplace/biharmonic equation
are shown in Table \ref{table_L_shape}. It is clear that the
eigenvalues converge to the real eigenvalue as $h$ approaches $0$. The
eigenfunctions corresponding to the first eigenvalue and third
eigenvalue are presented in Figures \ref{polygonal_elliptic_eigfun}
and \ref{polygonal_biharmonic_eigfun}.

\begin{figure}   \begin{center}
\includegraphics[width=0.48\textwidth]{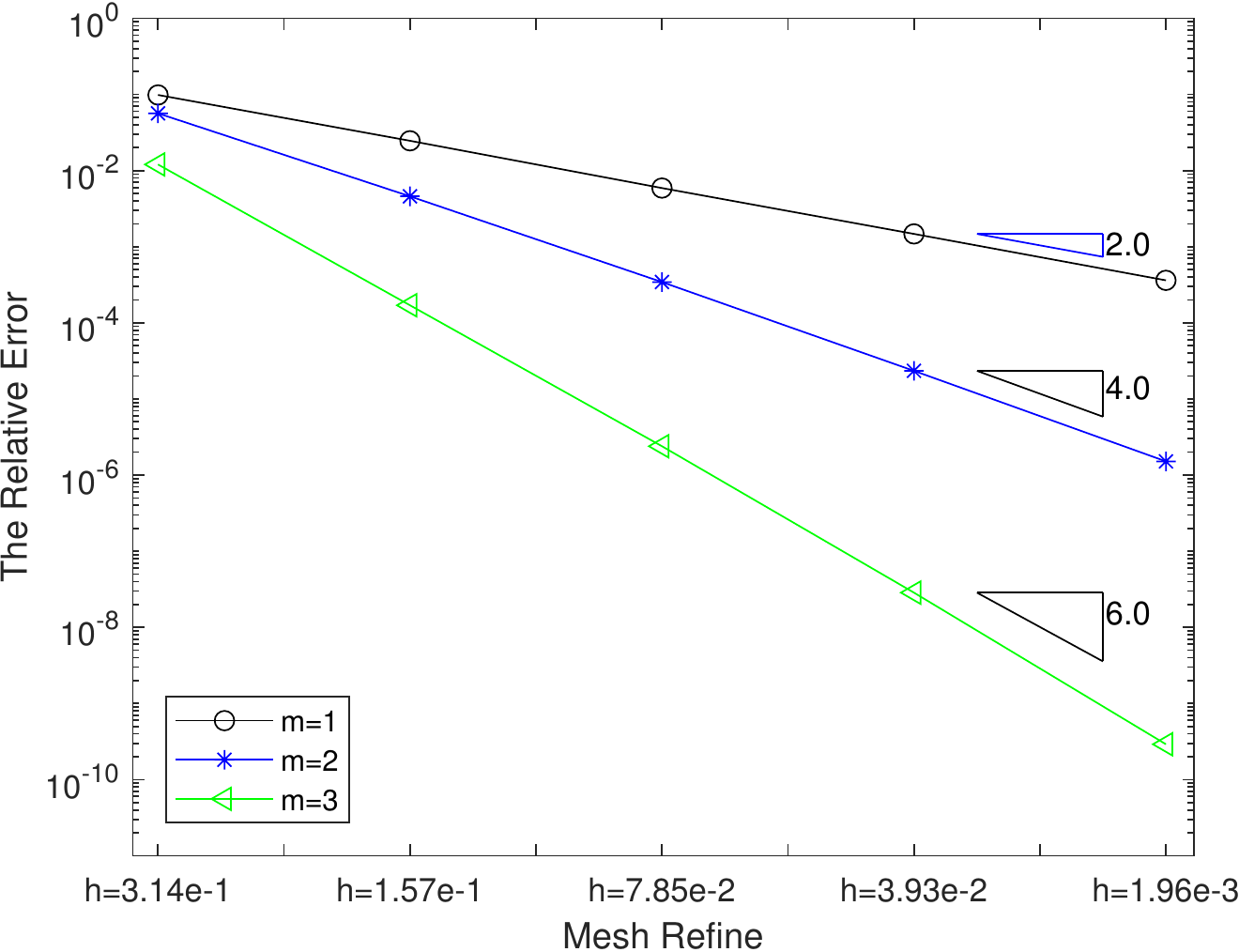}
\includegraphics[width=0.48\textwidth]{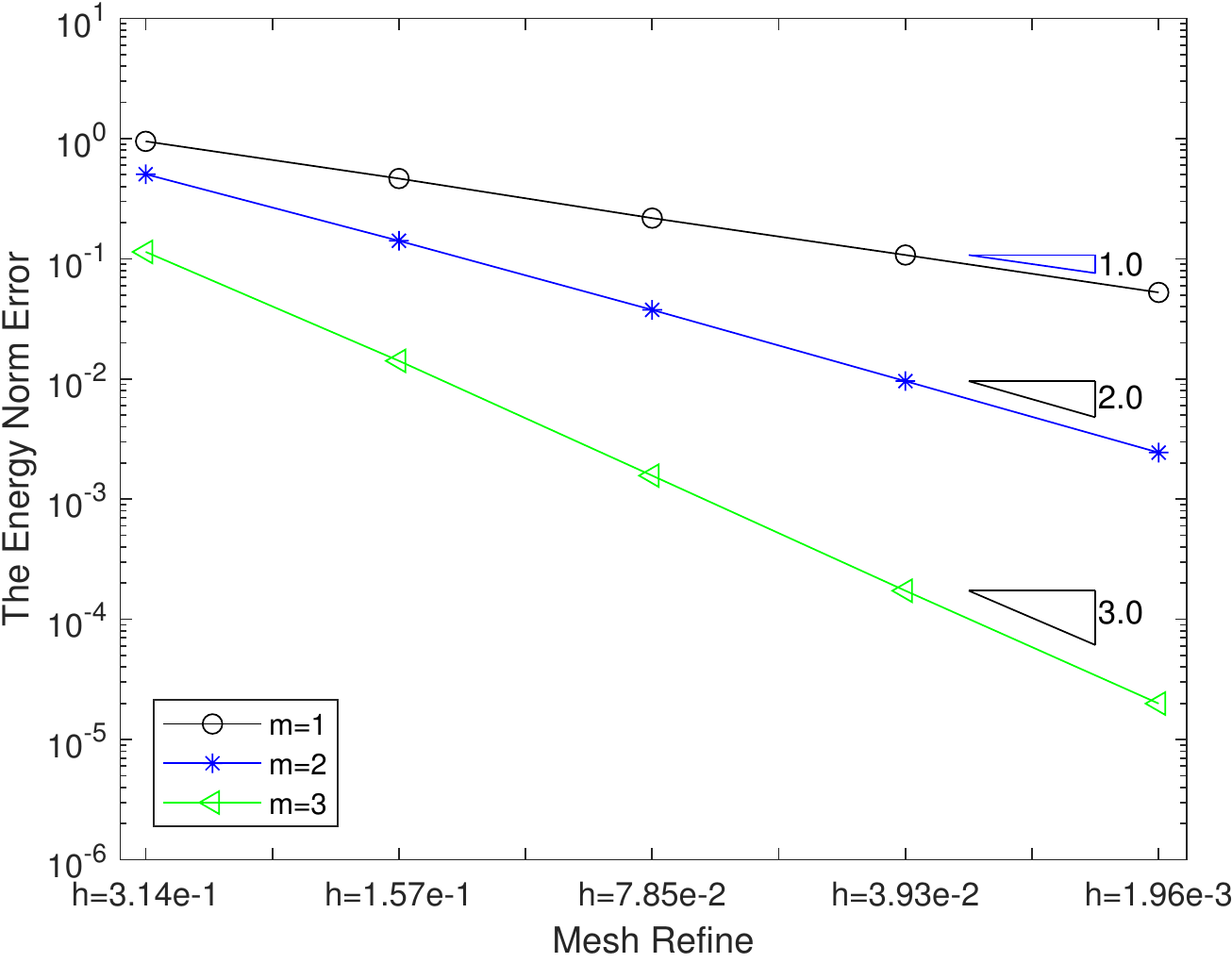}
\caption{The convergence rates of the 3rd eigenvalue (left) /
eigenfunction (right) of the second order problem for different
orders $m$ on polygonal meshes for Example 2.}
\label{polygonal_elliptic_error}   
\end{center} 
\end{figure}

\begin{figure}   \begin{center}
\includegraphics[width=0.48\textwidth]{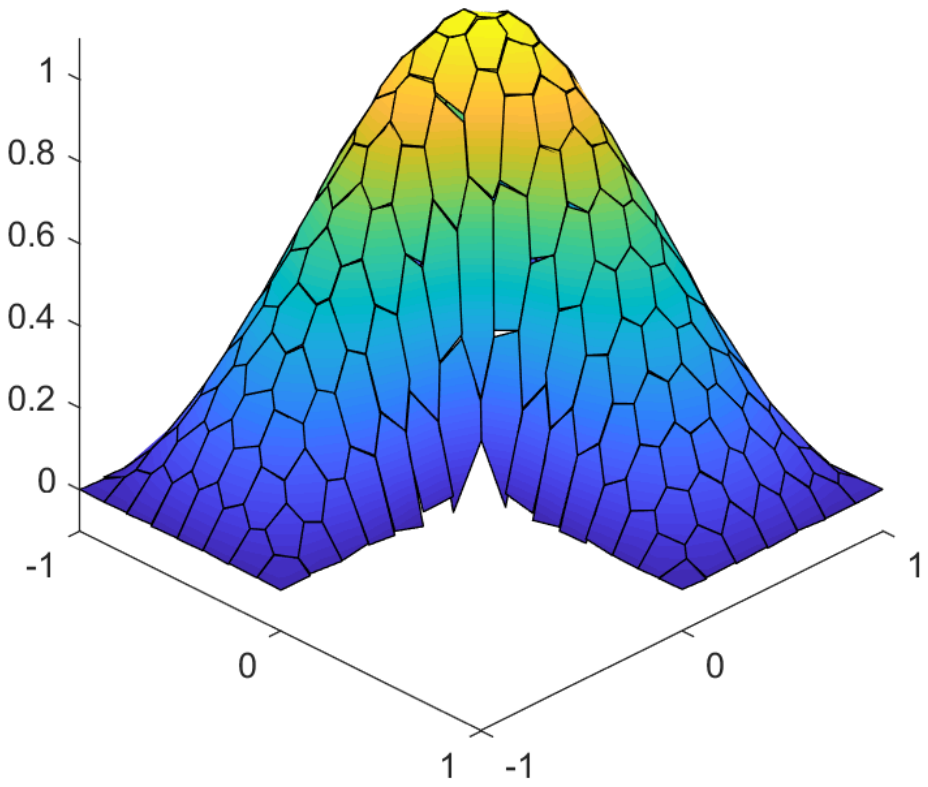}
\includegraphics[width=0.48\textwidth]{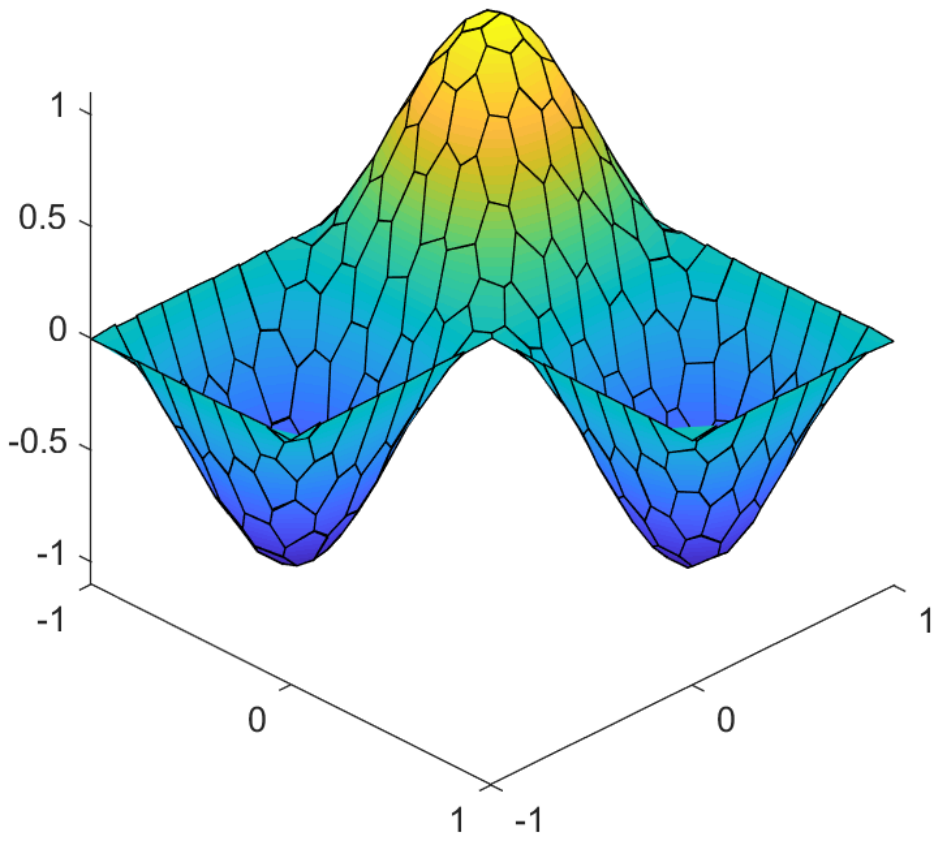}     
\caption{The 1st eigenfunction(left) and the 3rd       eigenfunction(right) of the
second order problem for Example 2.}
\label{polygonal_elliptic_eigfun}   
\end{center} 
\end{figure}

\begin{figure}   \begin{center}
\includegraphics[width=0.48\textwidth]{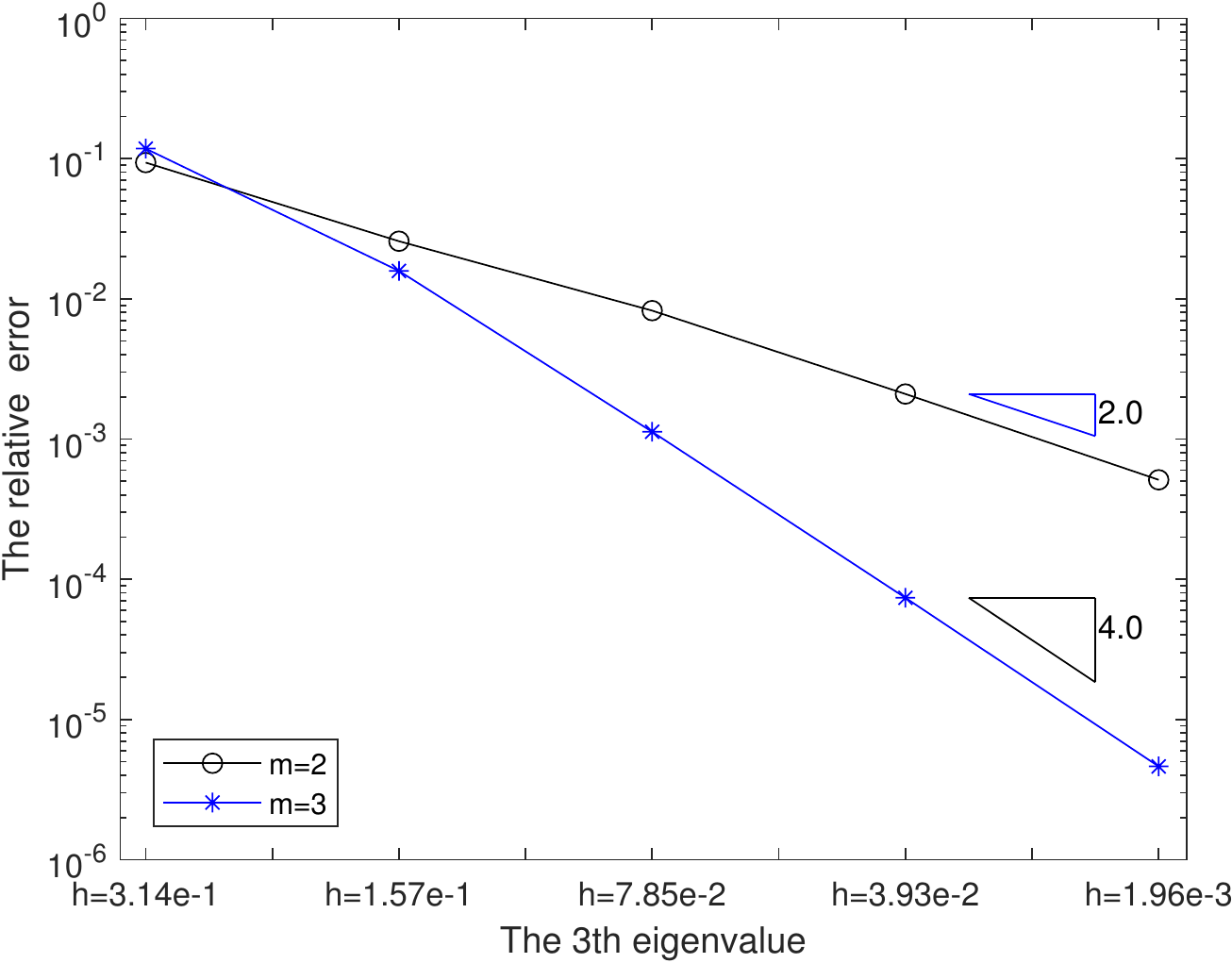}
\includegraphics[width=0.48\textwidth]{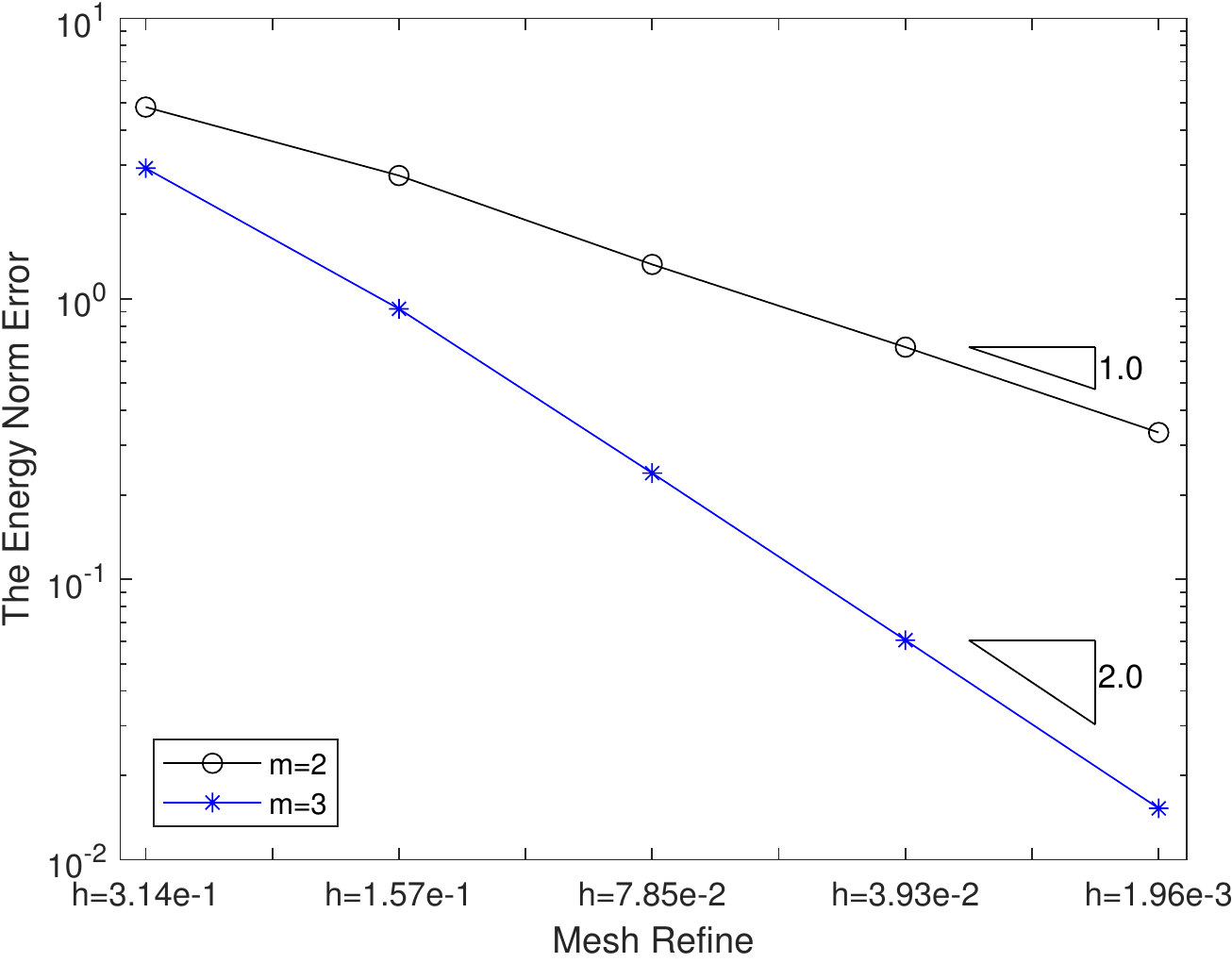}
\caption{The convergence rates of 3rd eigenvalue (left) /
eigenfunction (right) of the biharmonic problem for different
orders $m$ on polygonal meshes for Example 2.}
\label{polygonal_biharmonic_error}   
\end{center} 
\end{figure}

\begin{figure}   
\begin{center}
\includegraphics[width=0.48\textwidth]{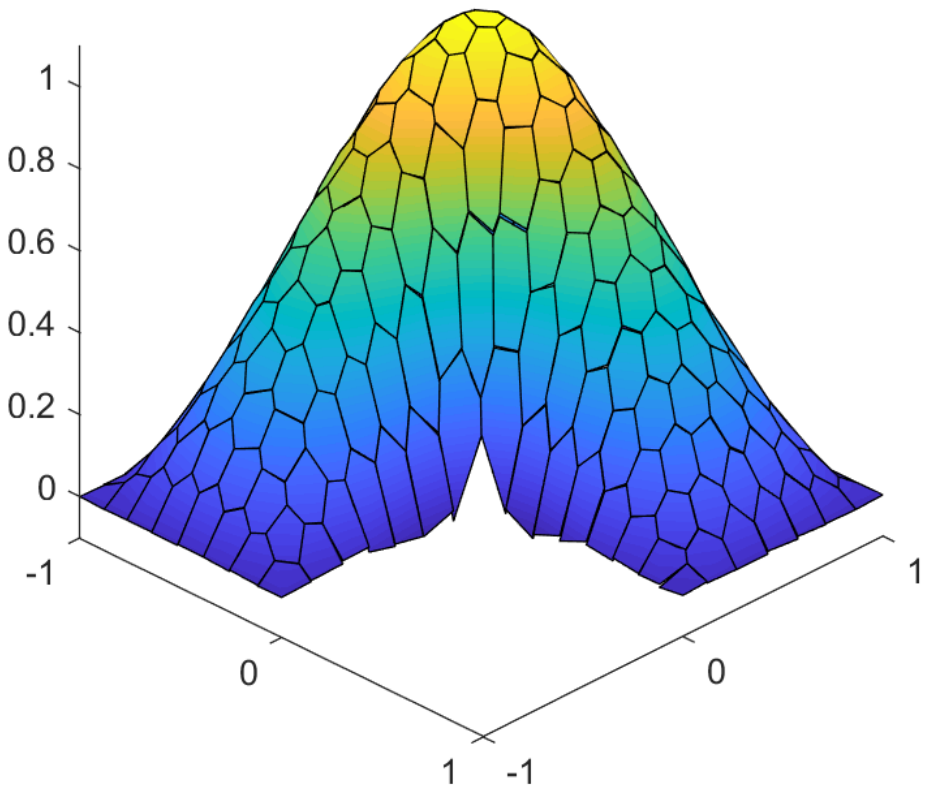}     
\includegraphics[width=0.48\textwidth]{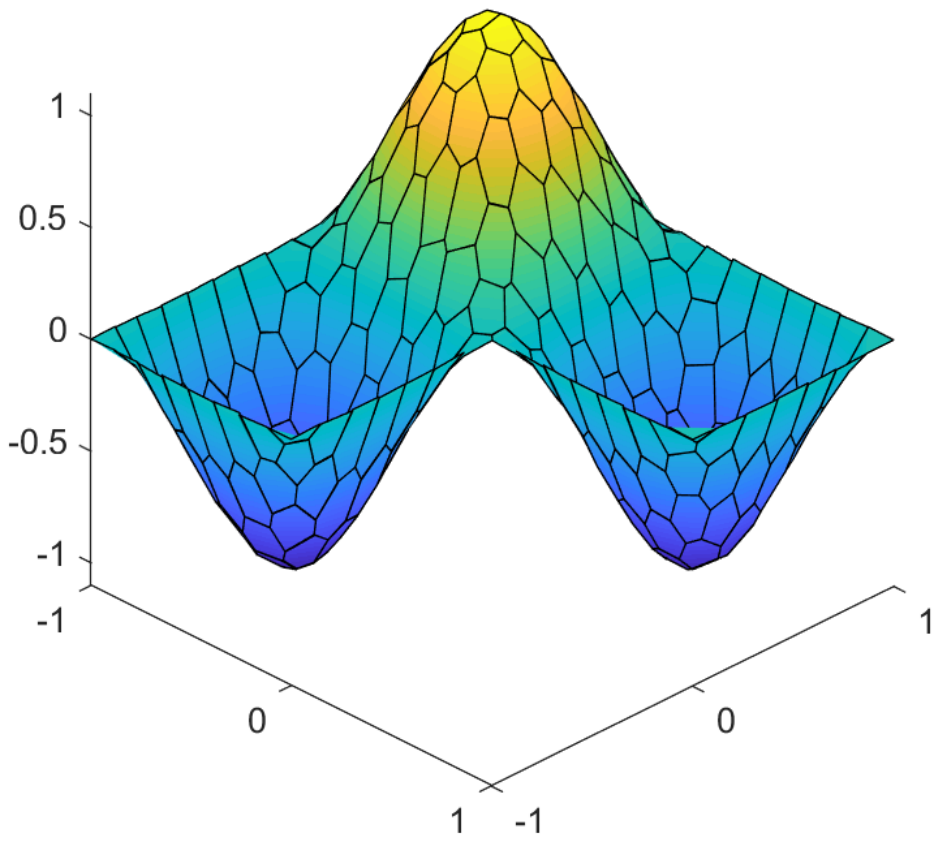}     
\caption{The 1st eigenfunction (left) and the 3rd eigenfunction (right) of the
biharmonic problem for Example 2.}
\label{polygonal_biharmonic_eigfun}   
\end{center} 
\end{figure}

\begin{table} 
\begin{center} 
\scalebox{0.9}{
\begin{tabular}{|c|c|c|c|c|c|c|} \hline Order & DOFs & N=2.00E+2 &
N=8.00E+2 & N=3.20E+3 & N=1.28E+4& N=5.12E+4 \\ \hline $m=1$ &
\multirow{3}{*}{Laplace} &10.786 & 9.9562 & 9.7396 & 9.6867 & 9.6733
\\ \cline{1-1} \cline{3-7} $m=2$ & &10.403 & 9.7422 & 9.6780& 9.6707 &
9.6692 \\ \cline{1-1} \cline{3-7} $m=3$ & &9.8128 & 9.6811 & 9.6724 &
9.6700 & 9.6691 \\ \hline \hline $m=2$ & \multirow{2}{*}{Biharmonic}
&179.98 & 171.55 & 167.78 & 166.75 & 165.43 \\ \cline{1-1} \cline{3-7}
$m=3$ & &168.82 & 166.78 & 165.77 & 165.12 & 164.68 \\ \hline
\end{tabular}} 
\caption{The first eigenvalues of the second order and
biharmonic equation in L-shaped domain.} \label{table_L_shape}
\end{center} 
\end{table}

For Example 3, the numerical results are presented in Tables
~\ref{table_elliptic_3D} and ~\ref{table_biharmonic_3D} for the second
order and biharmonic equation, respectively. The convergence order of
the second order equation is $h^{2m}$, and of the biharmonic equation
is $h^{2(m-1)}$.  Obviously, the computational results agree with the
error estimates.

\begin{table} \centering \begin{tabular}{|c|c|c|c|c|c|}   \hline Order
& Mesh Size & $h=$2.500E-1 & $h=$1.250E-1 &   $h=$6.250E-2 &
$h=$3.125E-3 \\ \hline \multirow{3}{*}{$m=1$} & Value   & 45.43 &
34.99 & 31.03 & 29.97 \\ \cline{2-6} & Error & 5.33E-1 &   1.81E-1 &
4.82E-2 & 1.24E-2 \\ \cline{2-6} & Order & - & 1.55 & 1.91   & 1.96 \\
\hline \multirow{3}{*}{$m=2$} & Value & 35.57 & 29.96 &   29.63 &
29.61 \\ \cline{2-6} & Error & 2.01E-1 & 1.21E-2 & 7.70E-4 &   4.98E-5
\\ \cline{2-6} & Order & - & 4.10 & 3.96 & 3.95 \\ \hline
\multirow{3}{*}{$m=3$} & Value & 31.34 & 29.63 & 29.61 & 29.61   \\
\cline{2-6} & Error & 5.85E-2 & 6.79E-4 & 1.07E-5 & 1.64E-7   \\
\cline{2-6} & Order & - & 6.42 & 5.99 & 6.02 \\ \hline
\multirow{3}{*}{$m=4$} & Value & 30.23 & 29.61 & 29.61 & 29.61   \\
\cline{2-6} & Error & 2.12E-2 & 8.24E-5 & 3.23E-7 & 1.23E-9   \\
\cline{2-6} & Order & - & 8.03 & 7.99 & 7.93 \\ \hline \end{tabular}
\caption{The first eigenvalues of the Laplace problem in 3D,
$\lambda_{1}=3\pi^2(29.61)$.} \label{table_elliptic_3D} \end{table}

\begin{table} \centering \begin{tabular}{|c|c|c|c|c|c|}   \hline Order
& Mesh Size & h=2.500E-1 & h=1.250E-1 & h=6.250E-2 &   h=3.125E-3 \\
\hline \multirow{3}{*}{m=2} & Value & 1000.54 & 906.41   & 883.20 &
878.20 \\ \cline{2-6} & Error & 1.41E-1 & 3.39E-2 &   7.44E-3 &
1.73E-3 \\ \cline{2-6} & Order & - & 2.05 & 2.18 & 2.09   \\ \hline
\multirow{3}{*}{m=3} & Value & 942.74 & 879.49 & 876.84 &   876.69 \\
\cline{2-6} & Error & 7.54E-2 & 3.21E-3 & 1.88E-4 &   1.07E-5 \\
\cline{2-6} & Order & - & 4.55 & 4.09 & 4.12 \\ \hline
\multirow{3}{*}{m=4} & Value & 897.55 & 876.85 & 876.68 & 876.68   \\
\cline{2-6} & Error & 2.38E-2 & 2.00E-4 & 2.91E-6 & 4.33E-8   \\
\cline{2-6} & Order & - & 6.89 & 6.10 & 6.07 \\ \hline \end{tabular}
\caption{The first eigenvalues of the biharmonic problem in 3D,
$\lambda_{1}=9\pi^4(876.68)$.} \label{table_biharmonic_3D} \end{table}

\begin{remark}   We note that all the eigenvalues obtained by the
proposed method are   greater than the exact eigenvalues. This
behavior appears if   conforming finite element method is used to
solve the eigenvalue   problem. However, the approximate space $V_h$
is not a subspace of   the space $V=H^{1}_{0}$ or $H^{2}_{0}$. In DG
framework, this phenomenon is related to the penalty parameter.
Warburton and Embree studied the role of penalty in the LDG method for
Maxwell's eigenvalue problem in~\cite{warburton2006role}.  Giani et
al.~\cite{Giani2018posteriori} used the asymptotic perturbation theory
to analyze the dependence of eigenvalues and eigenspaces on the
penalty parameter.    We hope the reason why this   happened in our
method can be clarified in future study. \end{remark}

\subsection{Efficiency in terms of number of DOFs } Next, we make a
comparison in terms of number of DOFs among different methods. For the
second order elliptic problem, we consider the conforming FEM,
standard SIPDG method~ \cite{antonietti2006discontinuous} and our
method. For the biharmonic problem, we consider the $C^0$ IPG,
standard SIPDG and our method. Here we will study the numerical
behavior for higher order approximation. We restrict to Example 1,
since in this case the solution has enough regularity.

We calculate the first eigenvalue and eigenfunction on successively
refined meshes. The errors of eigenvalue are measured in the relative
error, and the errors of the eigenfunction are measured in
$|\cdot|_{1,h}$ and $|\cdot|_{2,h}$ semi-norms, respectively.

For the Laplace problem, Figure \ref{tri_elliptic_compare} shows the
performance of the conforming FEM, SIPDG method and our method. The
approximation order $m$ is taken from $1$ to $4$.  The convergence
rate for the eigenvalue is $h^{2m}$ and for the eigenfunction the rate
is $h^{m}$ which meet the theoretical predictions. The horizontal
ordinate is the number of DOFs. The number of DOFs employed by our
method is fixed while the approximation order increases. In all cases,
the SIPDG method uses the maximum number of DOFs.  As one's
expectation, the figure shows that the efficiency of FEM is higher
than others for the low order approximation. Increasing of the
approximation order, our method becomes the most efficient method
among these three methods.

\begin{figure}   \begin{center}
\includegraphics[width=0.48\textwidth
]{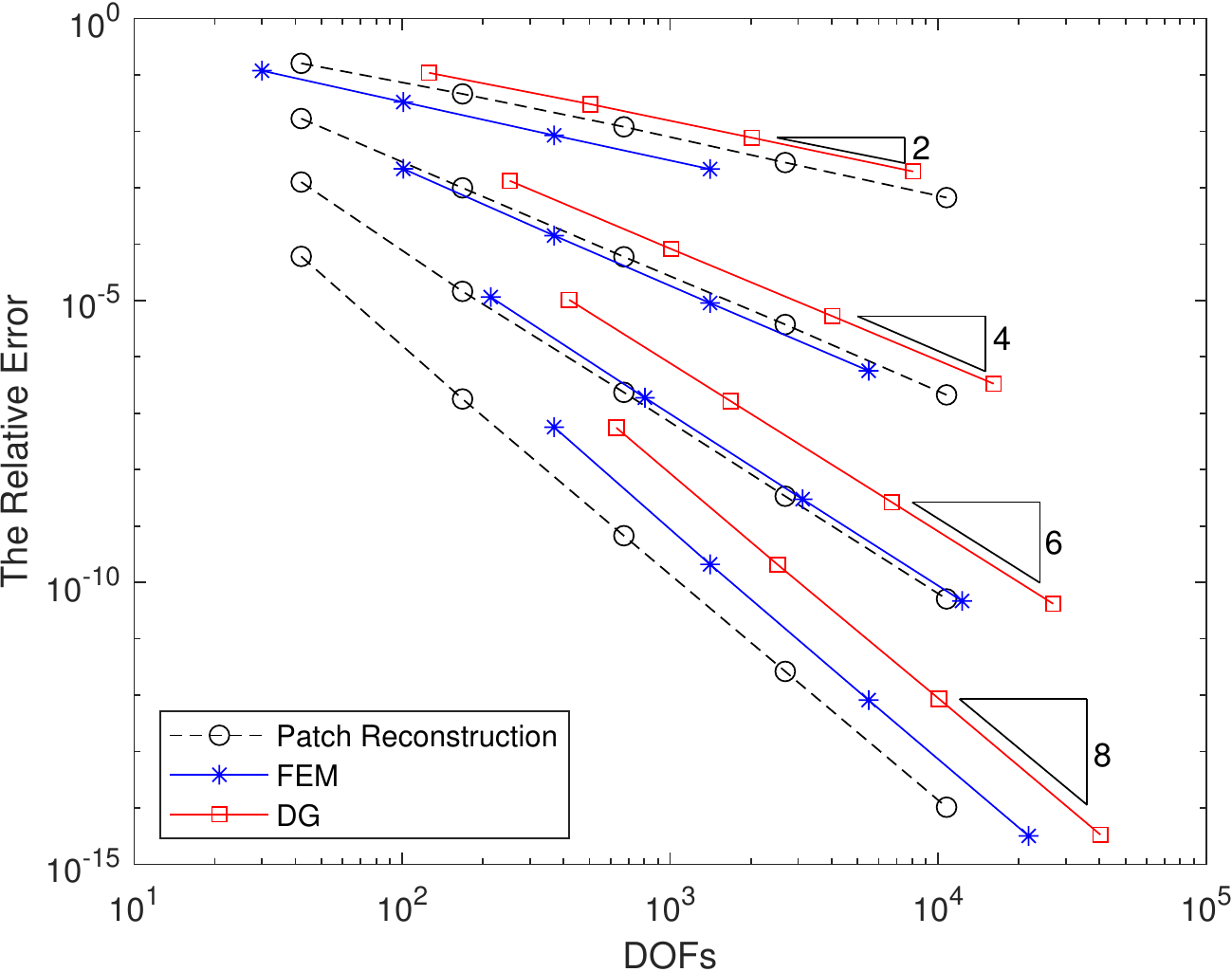}
\includegraphics[width=0.48\textwidth
]{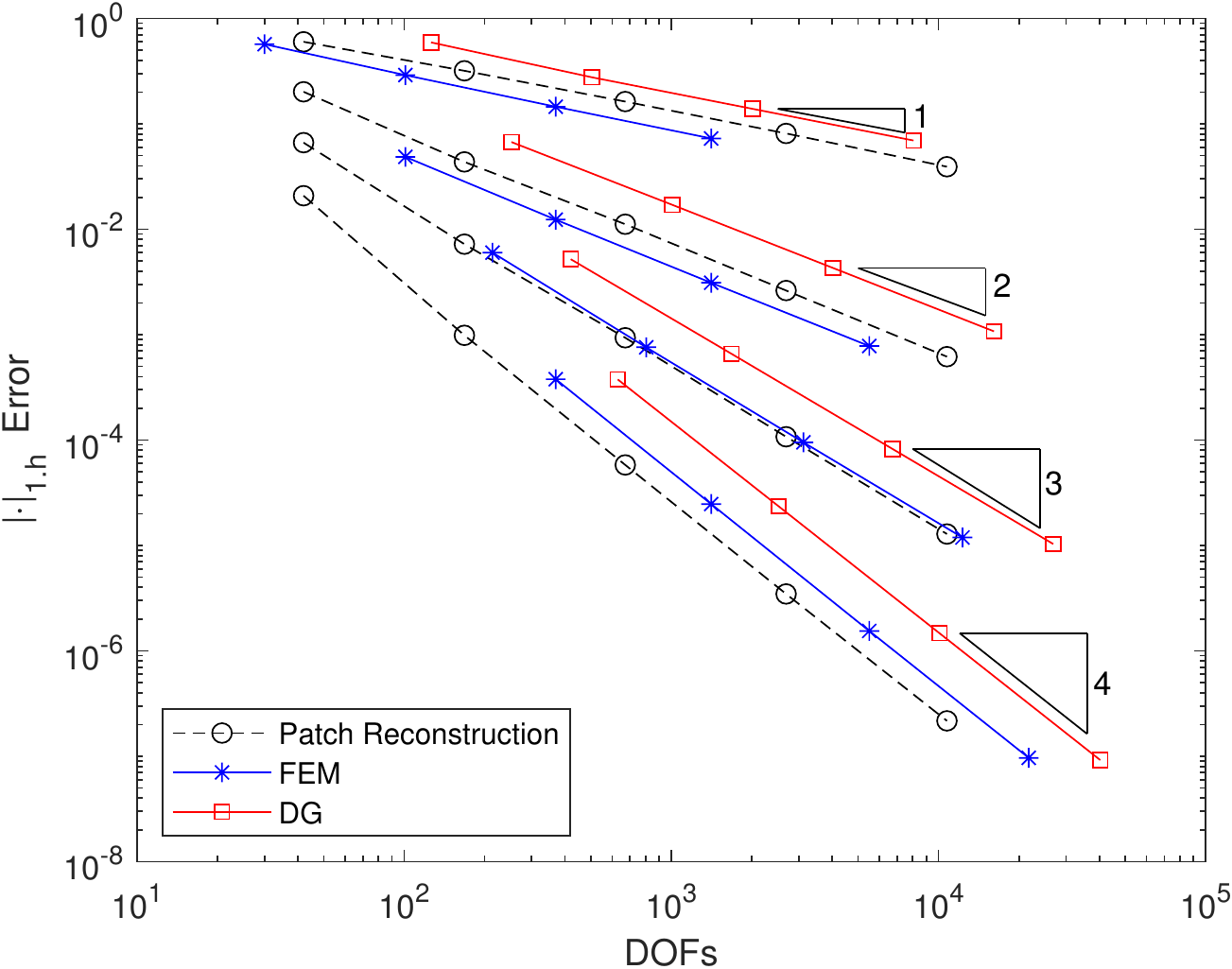}     
\caption{The
convergence rates of the 1st eigenvalue (left) /     eigenfunction
(right) of the second order problem for three     methods on triangle
meshes for Example 1.}     
\label{tri_elliptic_compare}   
\end{center}
\end{figure}

For the biharmonic problem, Figure \ref{tri_biharmonic_compare} shows
the error in terms of number of DOFs of the $C^0$ IPG method, standard
SIPDG method and our method. The approximation order $m$ is taken as
$2$, $3$, and $4$. The convergence rate for eigenvalue is $h^{2(m-1)}$
and the convergence rate is $h^{m-1}$ for eigenfunction which
perfectly agree with the error estimates. The experiments show that
our method performs better than the other methods in all cases. The
advantage of our method in efficiency is more remarkable for higher
order approximation.

\begin{figure}   \begin{center}
\includegraphics[width=0.48\textwidth
]{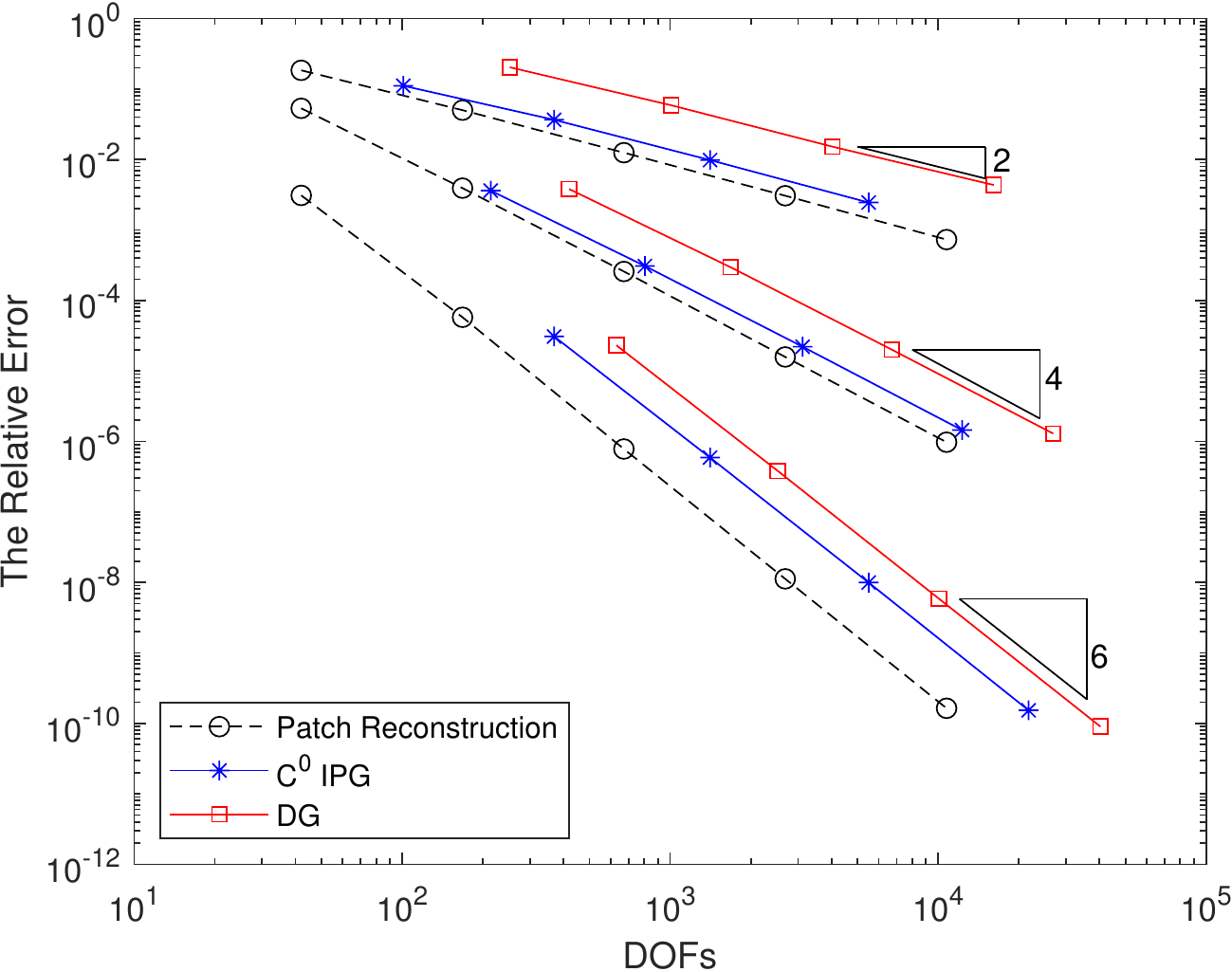}
\includegraphics[width=0.48\textwidth
]{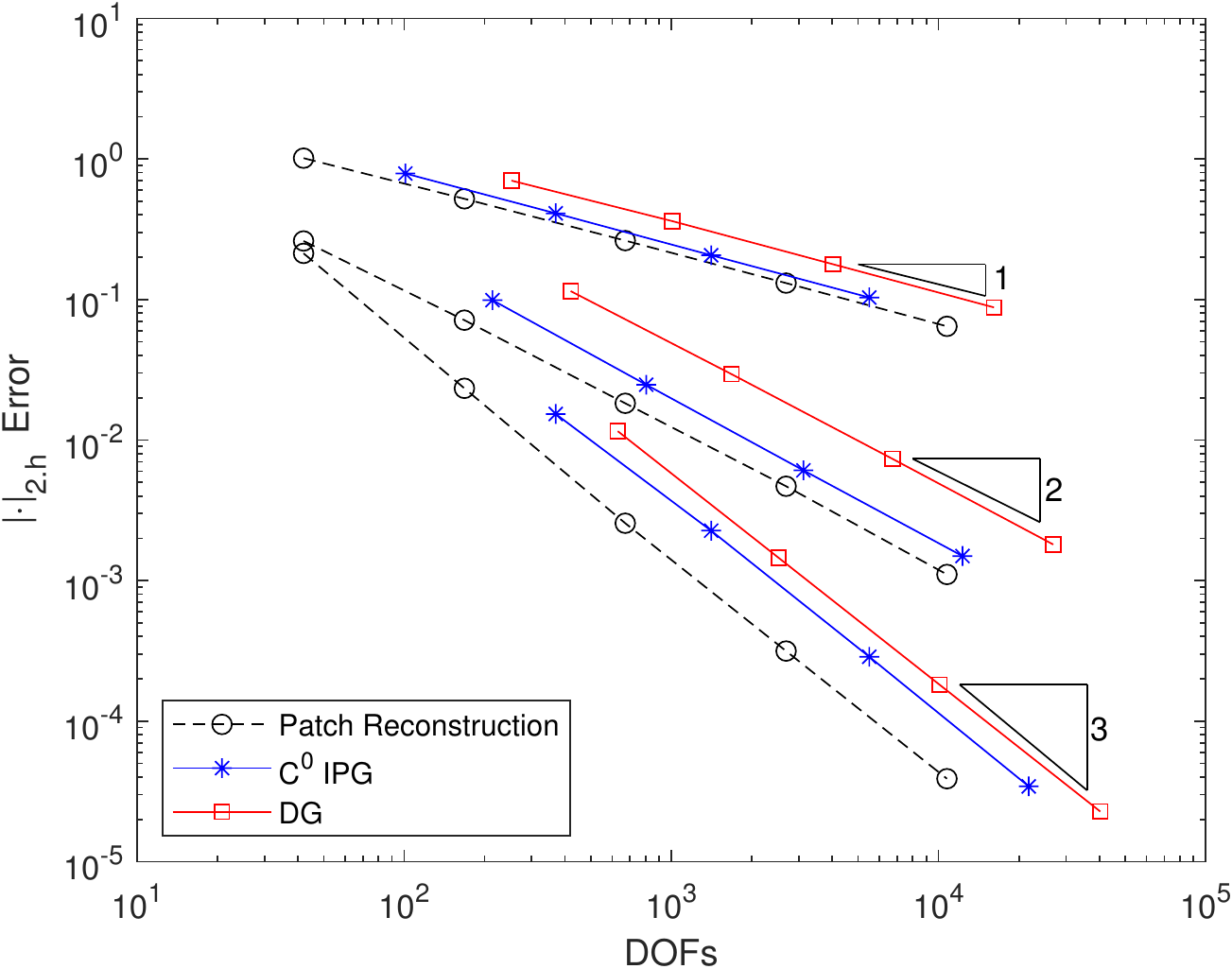}     
\caption{The
convergence rates of the 1st eigenvalue (left) /       eigenfunction
(right) of the biharmonic problem for three       methods on triangle
meshes for Example 1.}     
\label{tri_biharmonic_compare}
\end{center} 
\end{figure}

\subsection{Number of reliable eigenvalues} Zhang studied the number
of reliable eigenvalues of the finite element method in
\cite{zhang2015how}, and the main result he gave is as below:
\begin{theorem}\label{thm:quantity} Suppose that we solve a $2p$-order
elliptic equation on a domain $\Omega \in \mb{R}^D$ by the finite
element method (conforming or non-conforming) of polynomial degree
$m$ under a shape regular and quasi-uniform mesh with mesh-parameter
$h$. Assume that the exact eigenvalue grows as
$\lambda_{j}=O(j^\frac{2p}{D})$ and the relative error can be
estimated by $\frac{\lambda_i^{h}-\lambda_i}{\lambda_i}=
h^{m+1-p}\lambda_{i}^{\frac{m+1}{p-1}}$. Then there are about   \[
j_N=N^\frac{{m+1-p-\alpha/2}}{m+1-p}
m^{-D\frac{m+1-p-\alpha/2}{m+1-p}}   \]   reliable numerical
eigenvalues with the relative error of   $\lambda_{jN}$, converging at
rate $h^{\alpha}$ for $\alpha \in   (0,2(m+1-p)]$. Here $N$ is the
total degrees of freedom. \end{theorem}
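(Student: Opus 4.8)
The plan is to treat the two displayed asymptotics as hypotheses and run a counting argument that converts a prescribed accuracy threshold into an index threshold, then re-express that threshold through the total number of degrees of freedom. First I would make the notion of reliability precise: the $i$-th computed eigenvalue is reliable at rate $h^{\alpha}$ precisely when its relative error is of order $h^{\alpha}$, so that $j_N$ is the largest index $i$ for which $(\lambda_i^{h}-\lambda_i)/\lambda_i \lesssim h^{\alpha}$ still holds on a mesh of size $h$. Locating this critical index and rewriting it in terms of $N$ is the whole content of the statement.

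The decisive point is that the relative error is \emph{not} uniformly $O(h^{2(m+1-p)})$ across the spectrum: the hidden constant in the optimal estimate \eqref{eq:eigenvalue_estimate} of Theorem~\ref{approx_eigenvalue} grows with the eigenvalue. Indeed the $i$-th eigenfunction oscillates at frequency $\lambda_i^{1/(2p)}$, so $|u^{(i)}|_{H^{m+1}(\Omega)}\sim \lambda_i^{(m+1)/(2p)}$; substituting this frequency scaling into \eqref{eq:eigenvalue_estimate} and normalising by $\lambda_i$ is what turns the bound into a relative error of the form $h^{2\tau}\lambda_i^{\tau/p}$ with $\tau=m+1-p$, i.e. the assumed estimate. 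Capturing this dependence of the constant on the eigenvalue index is, in my view, the only genuinely nontrivial step; once it is in place the rest is exponent bookkeeping.

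Next I would solve the reliability inequality $h^{2\tau}\lambda_i^{\tau/p}\lesssim h^{\alpha}$ for the critical eigenvalue, obtaining $\lambda_{j_N}\lesssim h^{(\alpha-2\tau)p/\tau}$, and invert Weyl's law $\lambda_j=O(j^{2p/D})$, i.e. $j\sim \lambda_j^{D/(2p)}$, to translate the eigenvalue bound into an index bound $j_N \lesssim h^{D(\alpha/(2\tau)-1)}=h^{-D(1-\alpha/(2\tau))}$. The final step couples $h$ to $N$ through the mesh: under the quasi-uniform, shape-regular hypothesis the number of elements is $\Theta(h^{-D})$ and each degree-$m$ element carries $\Theta(m^{D})$ local degrees of freedom, so $N=\Theta(m^{D}h^{-D})$, whence $h^{-D}=\Theta(N m^{-D})$.

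Substituting $h^{-D}\sim N m^{-D}$ and simplifying the exponent $1-\alpha/(2\tau)=(\tau-\alpha/2)/\tau=(m+1-p-\alpha/2)/(m+1-p)$ gives
\[
j_N = N^{\frac{m+1-p-\alpha/2}{m+1-p}}\, m^{-D\frac{m+1-p-\alpha/2}{m+1-p}},
\]
as claimed. I would close by checking the admissible range $\alpha\in(0,2(m+1-p)]$: at the upper endpoint the exponent vanishes, so only the very bottom of the spectrum is resolved at the full doubled rate, while $\alpha>0$ keeps the exponent strictly below one, so $j_N$ is sublinear in $N$ — which is the quantitative form of the assertion that the reliable fraction $j_N/N$ shrinks as the mesh is refined, and which also exhibits the $m^{-D\cdot(\cdot)}$ factor favouring high polynomial degree.
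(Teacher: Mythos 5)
Your argument is correct, but note that the paper itself offers no proof of this statement: Theorem~\ref{thm:quantity} is quoted verbatim as Zhang's main result from \cite{zhang2015how}, so there is nothing internal to compare against. Judged on its own terms, your reconstruction is the right counting argument and it does reproduce the stated formula: the scaling $|u^{(i)}|_{H^{m+1}}\sim\lambda_i^{(m+1)/(2p)}$ turns the a priori bound into a relative error of size $h^{2\tau}\lambda_i^{\tau/p}$ with $\tau=m+1-p$, solving $h^{2\tau}\lambda_{j_N}^{\tau/p}\lesssim h^{\alpha}$ and inverting Weyl's law gives $j_N\lesssim h^{-D(\tau-\alpha/2)/\tau}$, and $N\sim m^{D}h^{-D}$ yields exactly $j_N=N^{(\tau-\alpha/2)/\tau}m^{-D(\tau-\alpha/2)/\tau}$, with the correct endpoint behaviour $j_N=O(1)$ at $\alpha=2\tau$. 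One point you should make explicit rather than gloss over: the relative-error hypothesis as printed in the theorem, $h^{m+1-p}\lambda_i^{(m+1)/(p-1)}$, is not the estimate you actually use --- it is singular at $p=1$ and inconsistent with the stated conclusion, and is evidently a misprint of $h^{2(m+1-p)}\lambda_i^{(m+1-p)/p}$. Your derivation silently substitutes the corrected form; since the theorem presents that scaling as a \emph{hypothesis}, you should either state that you are correcting it or present your frequency-scaling computation as the justification for the correction, otherwise your proof does not formally follow from the hypotheses as written.
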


Theorem \ref{thm:quantity} implies that the quantity of the reliable
numerical eigenvalues who have the optimal convergence rate
$\alpha=2(m+1-p)$ is $O(1)$, which means only eigenvalues lower in the
spectrum can achieve the optimal convergence rate. Therefore, for the
eigenvalue problem, the number of eigenvalues that have the optimal
convergence rate is very small. We here relax the convergence rate to
linear, saying taking $\alpha=1$, to identify if a numerical
eigenvalue is reliable. For the lowest order approximation of the
eigenvalue problem, linear element for Laplace operator and quadratic
element for biharmonic operator shall be involved. The predicted
number of the reliable numerical eigenvalues from Theorem
\ref{thm:quantity} is $O(N^{1/2})$, which implies that the percentage
of the reliable numerical eigenvalues reduce rapidly as the number of
DOFs of the system increases. For the higher order approximation, the
percentage of the reliable numerical eigenvalues reduces much slower
than the low order approximation.

To identify numerically if an eigenvalue is reliable, we define the
relative error by $\frac{|\lambda - \lambda_h|}{|\lambda|}$, and the
convergence rate by $\log_{2}\left( \frac{|\lambda -
\lambda_{2h}|}{|\lambda - \lambda_h|} \right)$. If the convergence
rate is not less than $1$, the eigenvalue is identified as reliable.
We carry out a series of numerical experiments with various $m$, while
the results are quite robust with almost the same efficiency.

\begin{table} \begin{center} \scalebox{1.0}{
\begin{tabular}{|c|c|c|c|c|} \hline Order & $N$(\#DOF) & 242 & 1,046 &
4,278 \\ \hline $m=1$ & \multirow{4}{*}{Laplace} &8 (3.3\%) & 17
(1.6\%) & 39 (0.9\%) \\ \cline{1-1} \cline{3-5}$ m=2$ & &32 (13.2\%) &
92 (8.8\%) & 270 (6.3\%) \\ \cline{1-1} \cline{3-5} $m=3$ & &38
(15.7\%) & 147 (14.0\%) & 553 (12.9\%) \\ \cline{1-1} \cline{3-5}
$m=4$ & & 96 (39.6\%) & 355 (33.9\%) & 1417 (33.1\%) \\ \hline \hline
$m=2$ & \multirow{3}{*}{Biharmonic} &24(9.9\%) & 53(5.0\%) & 94(2.2\%)
\\ \cline{1-1} \cline{3-5} $m=3$ & &45(18.6\%) & 204(19.5\%) &
691(16.1\%) \\ \cline{1-1} \cline{3-5} $m=4$ & &170 (70.2\%) & 705
(67.3\%) & 2798 (65.4\%) \\ \hline \end{tabular}} \caption{The number
$j_N$ of linear converged eigenvalues.} \label{table_quantity}
\end{center} \end{table}

Again we are limited to study the setup in Example 1 since we need
reference solutions. We calculate $j_{N}$ eigenvalues whose relative
errors are of order $O(h)$. Precisely, we enumerate the number of the
eigenvalues that are at least linearly convergent, with the result
given in Table \ref{table_quantity}. For the Laplace problem, there
are $O(N^{1/2})$ reliable numerical eigenvalues. In this table,  the
percentage decreases rapidly as the computational scale $N$ increases.
The number of the eigenvalues that are at least linearly convergent
increases a lot if the higher order approximation is applied, which is
as implied by Zhang's result that the higher order method could
produce more reliable numerical eigenvalues with the same $N$.
Moreover, for the higher order method, the percentage of the reliable
numerical eigenvalues reduces much slower than the lower order method.

The behavior of the number of reliable eigenvalues is similar for the
biharmonic equation, as shown in Table \ref{table_quantity}. The
numerical results confirm the prediction of Theorem \ref{thm:quantity}
and emphasize that the higher order approximations are more robust and
preferred for the eigenvalue problem.

%%% Local Variables: mode: latex TeX-master: "eigenvalue" End:

% vim:spell:tw=70:fo+=Mn:cc=70
\section{Conclusion}
We applied the symmetric interior penalty discontinuous Galerkin
method based on a patch reconstructed approximation space for solving
elliptic eigenvalue problem. The proposed method, when
  compared to other existing approximation methods, can be implemented
  in a more flexible way and its approximation properties are easier
  to analyse. Numerical results confirm the optimal convergence
rates and emphasize the great efficiency of our method in number of
DOFs.  The great efficiency and convenient implementation
  is even remarkable in the case of higher order approximation. Since
  high order approximation is preferred for the elliptic eigenvalue
  problems, our method is a quite appropriate method to solve the
  elliptic eigenvalue problems.

%%% Local Variables:
%%% mode: latex
%%% TeX-master: "eigenvalue"
%%% End:

% vim:spell:tw=70:fo+=Mn:cc=70

\section*{Acknowledgment}
The authors would like to thank the anonymous referees. They have very
constructively helped to improve the original version of this paper.
 
The research is supported by the National Natural Science Foundation
of China (Grant No.  91630310, 11421110001 and 11421101) and Science
Challenge Project, No.TZ2016002.

%%% Local Variables:
%%% mode: latex
%%% TeX-master: "eigenvalue"
%%% End:

\end{document}